\theoremstyle{plain}
\newtheorem{corollary}{Corollary}
\newtheorem{definition}{Definition}
\newtheorem{notation}{Notation}
\newtheorem{proposition}{Proposition}
\newtheorem{remark}{Remark}
\newtheorem{theorem}{Theorem}
\numberwithin{equation}{section}
\begin{document}
\title[]{Some Remarks on Nonlinear Hyperbolic Equations}
\author{Kamal N. Soltanov}
\address{{\small Department of Mathematics, }\\
{\small Faculty of Sciences, Hacettepe University, }\\
{\small Beytepe, Ankara, TR-06532, TURKEY} }
\email{soltanov@hacettepe.edu.tr}
\date{}
\subjclass[2010]{Primary 35G25, 35B65, 35L70; Secondary 35K55, 35G20}
\keywords{Nonlinear hyperbolic and parabolic equations, Neumann problem, a
priori estimation, smoothness}

\begin{abstract}
Here a mixed problem for a nonlinear hyperbolic equation with Neumann
boundary value condition is investigated, and a priori estimations for the
possible solutions of the considered problem are obtained. These results
demonstrate that any solution of this problem possess certain smoothness
properties.
\end{abstract}

\maketitle

\subsubsection{Introduction}

In this article we consider a mixed problem for a nonlinear hyperbolic
equation and study the smoothness of a possible solution of the problem, in
some sense. Here we got some new a priori estimations for a solution of the
considered problem.

It is known that, up to now, the problem of the solvability of a nonlinear
hyperbolic equation with nonlinearity of this type has not been solved when $%
\Omega \subset R^{n}$, $n\geq 2$. It should also be noted that it is not
possible to use the a priori estimations, which can be obtained by the known
methods, to prove the solvability in this case. Consequently, there are no
obtained results on a solvability of a mixed problem for the equation of the
following type

\begin{equation*}
\frac{\partial ^{2}u}{\partial t^{2}}-\overset{n}{\underset{i=1}{\sum }}D_{i}%
\left[ a_{i}\left( t,x\right) \left\vert D_{i}u\right\vert ^{p-2}D_{i}u%
\right] =h\left( t,x\right) ,\quad \left( t,x\right) \in Q_{T}
\end{equation*}%
\begin{equation*}
Q_{T}\equiv \left( 0,T\right) \times \Omega ,\quad \Omega \subset
R^{n},\quad T>0,\quad p>2.
\end{equation*}%
As known, the investigation of a mixed problem for the nonlinear hyperbolic
equations of such type on the Sobolev type spaces when $\ \Omega \subset
R^{n}$, $n\geq 2$ is connected with many difficulties (see, for example, the
works of Leray, Courant, Friedrichs, Lax, F. John, Garding, Ladyzhenskaya,
J.-L. Lions, H. Levine, Rozdestvenskii and also, [2, 7 - 11, 14 - 16, 18,
19], etc. ). Furthermore the possible solutions of this problem may possess
a gradient catastrophe. Only in the case $n=1$, it is achieved to prove
solvability theorems for the problems of such type (and essentially with
using the Riemann invariants).

However, recently certain classes of nonlinear hyperbolic equations were
investigated and results on the solvability of the considered problems in a
more generalized sense were obtained (see, for example, [13] its references)
and also certain result about dense solvability was obtained ([20]).
Furthermore there are such special class of the nonlinear hyperbolic
equations, for which the solvabilities were studied under some additional
conditions (see, [3, 4, 13, 17, 23] and its references), for example, under
some geometrical conditions.

Here, we investigate a mixed problem for equations of certain class with the
Neumann boundary-value conditions. In the beginning, a mixed problem for a
nonlinear parabolic equation with similar nonlinearity and conditions as
above is studied and the existence of the strongly solutions of this problem
is proved. Further some a priori estimations for a possible solution of the
considered problem is received in the hyperbolic case with use of the result
on the parabolic problem studied above. These results demonstrate that any
solution of our main problem possesses certain smoothness properties, which
might help for the proof of some existence theorems.\footnote{%
Unfortunately, I could not use the obtained estimates for this aim.}

\section{Formulation of Problem}

Consider the problem 
\begin{equation}
\frac{\partial ^{2}u}{\partial t^{2}}-\overset{n}{\underset{i=1}{\sum }}%
D_{i}\left( \left\vert D_{i}u\right\vert ^{p-2}D_{i}u\right) =h\left(
t,x\right) ,\quad \left( t,x\right) \in Q_{T},\quad p>2,  \tag{1.1}
\end{equation}%
\begin{equation}
u\left( 0,x\right) =u_{0}\left( x\right) ,\quad \frac{\partial u}{\partial t}%
\ \left\vert ~_{t=0}\right. =u_{1}\left( x\right) ,\quad x\in \Omega \subset
R^{n},\quad n\geq 2,  \tag{1.2}
\end{equation}%
\begin{equation}
\frac{\partial u}{\partial \widehat{\nu }}\left\vert {}~_{\Gamma }\right.
\equiv \overset{n}{\underset{i=1}{\sum }}\left\vert D_{i}u\right\vert
^{p-2}D_{i}u\cos \left( \nu ,x_{i}\right) =0,\quad \left( x%
{\acute{}}%
,t\right) \in \Gamma \equiv \partial \Omega \times \left[ 0,T\right] , 
\tag{1.3}
\end{equation}%
here $\Omega \subset R^{n},n\geq 2$ be a bounded domain with sufficiently
smooth boundary $\partial \Omega $; $u_{0}\left( x\right) $, $u_{1}\left(
x\right) $, $h\left( t,x\right) $ are functions such that $u_{0},u_{1}\in
W_{p}^{1}\left( \Omega \right) $, $h\in L_{p}\left( 0,T;W_{p}^{1}\left(
\Omega \right) \right) $, $\nu $ denote the unit outward normal to $\partial
\Omega $ (see, [10, 12]).

Introduce the class of the functions $u:Q\longrightarrow R$ 
\begin{equation*}
V\left( Q\right) \equiv W_{2}^{1}\left( 0,T;L_{2}\left( \Omega \right)
\right) \cap L^{\infty }\left( 0,T;W_{p}^{1}\left( \Omega \right) \right)
\cap L_{p-1}\left( 0,T;\widetilde{S}_{1,2\left( p-2\right) ,2}^{1}\left(
\Omega \right) \right) \cap
\end{equation*}%
\begin{equation*}
\left\{ u\left( t,x\right) \left\vert \ \frac{\partial ^{2}u}{\partial t^{2}}%
,\ \underset{i=1}{\overset{n}{\sum }}\left( \left\vert D_{i}u\right\vert
^{p-2}D_{i}^{2}u\right) \in L_{1}\left( 0,T;L_{2}\left( \Omega \right)
\right) \right. \right\} \cap
\end{equation*}%
\begin{equation*}
\left\{ u\left( t,x\right) \left\vert \ \overset{n}{\underset{i=1}{\sum }}%
\underset{0}{\overset{t}{\int }}\left\vert D_{i}u\right\vert
^{p-2}D_{i}ud\tau \in W_{\infty }^{1}\left( 0,T;L_{q}\left( \Omega \right)
\right) \cap L^{\infty }\left( 0,T;W_{2}^{1}\left( \Omega \right) \right)
\right. \right\}
\end{equation*}%
\begin{equation}
\left\{ u\left( t,x\right) \left\vert \ u\left( 0,x\right) =u_{0}\left(
x\right) ,\ \frac{\partial u}{\partial t}\ \left\vert ~_{t=0}\right.
=u_{1}\left( x\right) ,\ \frac{\partial u}{\partial \widehat{\nu }}%
\left\vert ~_{\Gamma }\right. =0\right. \right\}  \tag{DS}
\end{equation}

where 
\begin{equation*}
\widetilde{S}_{1,\alpha ,\beta }^{1}\left( \Omega \right) \equiv \left\{
u\left( t,x\right) \left\vert ~\left[ u\right] _{S_{1,\alpha ,\beta
}^{1}}^{\alpha +\beta }=\left\Vert u\right\Vert _{\alpha +\beta }^{\alpha
+\beta }+\underset{i=1}{\overset{n}{\sum }}\left\Vert D_{i}u\right\Vert
_{\alpha +\beta }^{\alpha +\beta }+\right. \right.
\end{equation*}

\begin{equation*}
\left. \left\Vert \underset{i,j=1}{\overset{n}{\sum }}\left\vert
D_{i}u\right\vert ^{\frac{\alpha }{\beta }}D_{j}D_{i}u\right\Vert _{\beta
}^{\beta }<\infty \right\} ,\quad \alpha \geq 0,\ \beta \geq 1.
\end{equation*}

Thus, we will understand the solution of the problem in the following form:
A function $u\left( t,x\right) \in V\left( Q\right) $ is called solution of
problem (1.1) - (1.3) if $u\left( t,x\right) $ satisfies the following
equality 
\begin{equation*}
\left[ \frac{\partial ^{2}u}{\partial t^{2}},v\right] -\left[ \overset{n}{%
\underset{i=1}{\sum }}D_{i}\left( \left\vert D_{i}u\right\vert
^{p-2}D_{i}u\right) ,v\right] =\left[ h,v\right]
\end{equation*}%
for any $v\in W_{q}^{1}\left( 0,T;L_{2}\left( \Omega \right) \right) \cap
L^{\infty }\left( Q\right) $, where $\left[ \circ ,\circ \right] \equiv 
\underset{Q}{\int }\circ \times \circ ~dxdt$.

Our aim in this article is to prove

\begin{theorem}
Under the conditions of this section each solution of problem (1.1)-(1.3)
belongs to a bounded subset of the space $V\left( Q\right) $ defined in (DS).
\end{theorem}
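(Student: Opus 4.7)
The plan is to derive a cascade of a priori estimates by testing the hyperbolic equation with carefully chosen multipliers, obtaining in turn each of the regularity classes appearing in the definition of $V(Q)$.

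First, I would test (1.1) with $u_t$. The Neumann condition (1.3) kills the boundary term from the integration by parts on the divergence, producing the classical energy identity
\begin{equation*}
\tfrac{1}{2}\|u_t(t)\|_{L_2}^2+\tfrac{1}{p}\sum_{i=1}^{n}\|D_iu(t)\|_{L_p}^{p}=\tfrac{1}{2}\|u_1\|_{L_2}^2+\tfrac{1}{p}\sum_{i=1}^{n}\|D_iu_0\|_{L_p}^{p}+\int_{0}^{t}\!\!\int_{\Omega}h\,u_t.
\end{equation*}
Young's inequality applied to the right-hand side (using $h\in L_p(0,T;W_p^1(\Omega))\hookrightarrow L_1(0,T;L_2(\Omega))$) and a Gronwall argument yield the bound on $u$ in $W_{2}^{1}(0,T;L_{2}(\Omega))\cap L^{\infty}(0,T;W_{p}^{1}(\Omega))$.

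Next, to capture the component $\sum_i\int_0^t|D_iu|^{p-2}D_iu\,d\tau \in W_\infty^1(0,T;L_q(\Omega))\cap L^\infty(0,T;W_2^1(\Omega))$, I would integrate (1.1) from $0$ to $t$. Writing $W_i(t,x):=\int_0^t|D_iu|^{p-2}D_iu\,d\tau$ and $H(t,x):=\int_0^t h\,d\tau$, one obtains
\begin{equation*}
u_t(t,x)-u_1(x)=\sum_{i=1}^{n}D_iW_i(t,x)+H(t,x).
\end{equation*}
The Step~1 control on $u_t$ and $D_iu$, together with testing this relation against $W_i$ and $D_iW_i$ (and observing that the Neumann-type boundary integral vanishes by (1.3)), yields the required bound.

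The principal obstacle is the $L_{p-1}(0,T;\widetilde{S}_{1,2(p-2),2}^{1}(\Omega))$ estimate, whose delicate ingredient is the quadratic form $\|\sum_{i,j}|D_iu|^{p-2}D_jD_iu\|_{L_2}^2$. Here I would follow the strategy announced in the introduction: treat the identity of Step~2 as a quasi-parabolic relation for $u_t$ with principal part $\sum_iD_iW_i$, differentiate spatially, multiply by a symmetric second-order multiplier (morally $-\sum_j D_j^2u$), and exploit the pointwise identity $D_j[|D_iu|^{p-2}D_iu]=(p-1)|D_iu|^{p-2}D_jD_iu$. After summation in $i,j$ this recovers the weighted $L_2$ norm of the mixed second derivatives, all boundary contributions being controlled via (1.3) and an approximation argument. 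This is the step which genuinely requires invoking the parabolic auxiliary result promised in the introduction, because the hyperbolic energy method alone does not produce the weight $|D_iu|^{p-2}$ in front of $(D_jD_iu)^2$.

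Finally, solving (1.1) for $u_{tt}$ gives
\begin{equation*}
u_{tt}=h+(p-1)\sum_{i=1}^{n}|D_iu|^{p-2}D_i^{2}u,
\end{equation*}
so the $L_1(0,T;L_2(\Omega))$ bound on both $u_{tt}$ and the nonlinear sum follows at once from the Step~3 estimate and the hypothesis on $h$. All components of $V(Q)$ are then controlled in terms of the data $(u_0,u_1,h)$, as required. The hard part is undoubtedly Step~3: the natural hyperbolic multiplier $u_t$ produces only the energy identity, so extra regularity must come from the parabolic companion problem rather than from the hyperbolic equation itself.
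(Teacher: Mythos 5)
Your Steps 1, 2 and 4 are essentially sound and close in spirit to the paper: the paper also works with the time-integrated equation (its problem (1.4)--(1.5), with $W_i=\int_0^t|D_iu|^{p-2}D_iu\,d\tau$), obtains $u\in L^\infty(0,T;W_p^1)$, $u_t\in L^\infty(0,T;L_2)$ and the bounds on $\sum_iW_i$, and then reads off $u_{tt}$ from the equation. The one technical difference there is the choice of multiplier: the paper tests (1.4) with the \emph{parabolic} operator $f(u)=u_t-\sum_jD_j\left(|D_ju|^{p-2}D_ju\right)=u_t-\partial_t\sum_jD_jW_j$, so that the cross term with $-\sum_iD_iW_i$ is the exact derivative $\tfrac12\tfrac{d}{dt}\bigl\Vert\sum_iD_iW_i\bigr\Vert_{L_2}^2$ and a single Gronwall argument delivers all of (4.1) at once, including the $L^\infty(0,T;L_2)$ bound on $\sum_iD_iW_i$. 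Your ``test against $W_i$ and $D_iW_i$'' does not have this structure ($\int(\sum_iD_iW_i)(\sum_jD_jW_j)$ is not a time derivative), so you should replace it by the paper's multiplier; as written, Step 2 is only morally correct.

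The genuine gap is Step 3. The identity $D_j\bigl[|D_iu|^{p-2}D_iu\bigr]=(p-1)|D_iu|^{p-2}D_jD_iu$ produces the weighted square $(p-1)\int|D_iu|^{p-2}(D_jD_iu)^2$ only when it is paired against $D_jD_iu$ \emph{at the same time} $t$ -- which is exactly what happens for the parabolic equation (1.6) in the paper's Proposition 1. For the hyperbolic problem the quasi-parabolic relation of your Step 2 carries the \emph{time-integrated} nonlinearity $\sum_iD_iW_i$, so after spatial differentiation and multiplication by $-\sum_jD_j^2u$ the critical term is $-\sum_{i,j}\int_\Omega\bigl(\int_0^tD_j(|D_iu|^{p-2}D_iu)\,d\tau\bigr)D_jD_iu\,dx$, in which the weight is evaluated at earlier times than the second derivatives: this is neither sign-definite nor an exact derivative, and the computation does not close. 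Invoking ``the parabolic auxiliary result'' does not repair this, because Theorem 3 concerns solutions of a different equation and transfers no regularity to solutions of (1.1)--(1.3). The paper reaches the weighted second derivatives by an entirely different route: the coercive multiplier above gives $\sum_iW_i\in L^\infty(0,T;W_2^1(\Omega))$, hence $\int_0^t\sum_{i,j}D_j(|D_iu|^{p-2}D_iu)\,d\tau$ bounded in $L^\infty(0,T;L_2(\Omega))$, from which it extracts (its (4.8)) the $L_1(0,T;L_2)$ bound on $\sum_{i,j}D_j(|D_iu|^{p-2}D_iu)=(p-1)\sum_{i,j}|D_iu|^{p-2}D_jD_iu$ and then on $u_{tt}$. (Whether a uniform bound on the time integral really implies integrability of the integrand is a delicate point of the paper itself, but at least it is a concrete mechanism; your Step 3 does not yet contain one.)
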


For the investigation of the posed problem in the beginning we will study
two problems, which are connected with considered problem. One of \ these
problems immediately follows from problem (1.1)-(1.3) and have the form: 
\begin{equation}
\frac{\partial u}{\partial t}-\overset{n}{\underset{i=1}{\sum }}D_{i}\overset%
{t}{\underset{0}{\int }}\left( \left\vert D_{i}u\right\vert
^{p-2}D_{i}u\right) d\tau =H\left( t,x\right) +u_{1}\left( x\right) , 
\tag{1.4}
\end{equation}%
where $H\left( t,x\right) =\underset{0}{\overset{t}{\int }}h\left( \tau
,x\right) d\tau $.

Consequently, if $u\left( t,x\right) $ is a solution of problem (1.1) -
(1.3) then $u\left( t,x\right) $ is a such solution of equation (1.4) that
the following conditions are fulfilled: 
\begin{equation}
u\left( 0,x\right) =u_{0}\left( x\right) ,\quad \frac{\partial u}{\partial 
\widehat{\nu }}\left\vert _{\Gamma }\right. =0.  \tag{1.5}
\end{equation}

From here it follows that problems (1.1) - (1.3) and (1.4) - (1.5) are
equivalent.

And other problem is the nonlinear parabolic problem 
\begin{equation}
\frac{\partial u}{\partial t}-\ \overset{n}{\underset{i=1}{\sum }}%
D_{i}\left( \left\vert D_{i}u\right\vert ^{p-2}D_{i}u\right) =h\left(
t,x\right) ,\quad \left( t,x\right) \in Q,\ p>2,\ n\geq 2,  \tag{1.6}
\end{equation}%
\begin{equation}
u\left( 0,x\right) =u_{0}\left( x\right) ,\quad x\in \Omega ,\ \quad \frac{%
\partial u}{\partial \widehat{\nu }}\left\vert ~_{\Gamma }\right. =0, 
\tag{1.7}
\end{equation}%
where $u_{0}\in W_{p}^{1}\left( \Omega \right) $, $h\in L_{2}\left(
0,T;W_{2}^{1}\left( \Omega \right) \right) $ and $p>2$.

In the beginning the solvability of this problem is studied, for which the
general result is used, therefore we begin with this result.

\section{Some General Solvability Results}

Let $X,Y$ be locally convex vector topological spaces, $B\subseteq Y$ be a
Banach space and $g:D\left( g\right) \subseteq X\longrightarrow Y$ be a
mapping. Introduce the following subset of $\ X$ 
\begin{equation*}
\mathcal{M}_{gB}\equiv \left\{ x\in X\left\vert ~g\left( x\right) \in
B,\right. \func{Im}g\cap B\neq \varnothing \right\} .
\end{equation*}

\begin{definition}
A subset $\mathcal{M}\subseteq X$ is called a $pn-$space (i.e. pseudonormed
space) if $\mathcal{M}$ is a topological space and there is a function $%
\left[ \cdot \right] _{S}:\mathcal{M}\longrightarrow R_{+}^{1}\equiv \left[
0,\infty \right) $ (wh\i ch is called $p-$norm of $\mathcal{M}$) such that

qn) $\left[ x\right] _{\mathcal{M}}\geq 0$, $\forall x\in \mathcal{M}$ and $%
0\in \mathcal{M}$, $x=0\Longrightarrow \left[ x\right] _{\mathcal{M}}=0$;

pn) \ $\left[ x_{1}\right] _{\mathcal{M}}\neq \left[ x_{2}\right] _{\mathcal{%
M}}\Longrightarrow x_{1}\neq x_{2}$, for $x_{1},x_{2}\in \mathcal{M}$, and $%
\left[ x\right] _{\mathcal{M}}=0\Longrightarrow x=0$;
\end{definition}

The following conditions are often fulfilled in the spaces $\mathcal{M}_{gB}$%
.

N) There exist a convex function $\nu :R^{1}\longrightarrow \overline{%
R_{+}^{1}}$ and number $K\in \left( 0,\infty \right] $ such that $\left[
\lambda x\right] _{\mathcal{M}}\leq \nu \left( \lambda \right) \left[ x%
\right] _{\mathcal{M}}$ for any $x\in \mathcal{M}$ and $\lambda \in R^{1}$, $%
\left\vert \lambda \right\vert <K$, moreover $\underset{\left\vert \lambda
\right\vert \longrightarrow \lambda _{j}}{\lim }\frac{\nu \left( \lambda
\right) }{\left\vert \lambda \right\vert }=c_{j}$, $j=0,1$ where $\lambda
_{0}=0$, $\lambda _{1}=K$ and $c_{0}=c_{1}=1$ or $c_{0}=0$, $c_{1}=\infty $,
i.e. if $K=\infty $ then $\lambda x\in \mathcal{M}$ for any $x\in S$ and $%
\lambda \in R^{1}.$

Let $g:D\left( g\right) \subseteq X\longrightarrow Y$ be such a mapping that 
$\mathcal{M}_{gB}\neq \varnothing $ and the following conditions are
fulfilled

(g$_{\text{1}}$) $g:D\left( g\right) \longleftrightarrow \func{Im}g$ is
bijection and $g\left( 0\right) =0$;

(g$_{\text{2}}$) there is a function $\nu :R^{1}\longrightarrow \overline{%
R_{+}^{1}}$ satisfying condition N such that 
\begin{equation*}
\left\Vert g\left( \lambda x\right) \right\Vert _{B}\leq \nu \left( \lambda
\right) \left\Vert g\left( x\right) \right\Vert _{B},\ \forall x\in \mathcal{%
M}_{gB},\ \forall \lambda \in R^{1};
\end{equation*}%
If mapping $g$ satisfies conditions (g$_{1}$) and (g$_{2}$), then $\mathcal{M%
}_{gB}$ is a $pn-$space with $p-$norm defined in the following way: there is
a one-to-one function $\psi :R_{+}^{1}\longrightarrow R_{+}^{1}$, $\psi
\left( 0\right) =0$, $\psi ,\psi ^{-1}\in C^{0}$ such that $\left[ x\right]
_{\mathcal{M}_{gB}}\equiv \psi ^{-1}\left( \left\Vert g\left( x\right)
\right\Vert _{B}\right) $. In this case $\mathcal{M}_{gB}$ is a metric space
with a metric: $d_{\mathcal{M}}\left( x_{1};x_{2}\right) \equiv \left\Vert
g\left( x_{1}\right) -g\left( x_{2}\right) \right\Vert _{B}$. Further, we
consider just such type of $pn-$spaces.

\begin{definition}
The $pn-$space $\mathcal{M}_{gB}$ is called weakly complete if $g\left( 
\mathcal{M}_{gB}\right) $ is weakly closed in $B.$ The pn-space $\mathcal{M}%
_{gB}$ is "reflexive" if each bounded weakly closed subset of $\mathcal{M}%
_{gB}$ is weakly compact in $\mathcal{M}_{gB}$.
\end{definition}

It is clear that if $B$ is a reflexive Banach space and $\mathcal{M}_{gB}$
is a $pn-$space, then $\mathcal{M}_{gB}$ is "reflexive". Moreover, if $B$ is
a separable Banach space, then $\mathcal{M}_{gB}$ is separable (see, for
example, [21, 22] and their references).

Now, consider a nonlinear equation in the general form. Let $X,Y$ be Banach
spaces with dual spaces $X^{\ast },Y^{\ast }$ respectively, $\mathcal{M}%
_{0}\subseteq X$ is a weakly complete $pn-$space, $f:D\left( f\right)
\subseteq X\longrightarrow Y$ be a nonlinear operator. Consider the equation 
\begin{equation}
f\left( x\right) =y,\quad y\in Y.  \tag{2.1}
\end{equation}

\begin{notation}
It is clear that (2.1)$\mathit{\ }$is equivalent to the following functional
equation:%
\begin{equation}
\left\langle f\left( x\right) ,y^{\ast }\right\rangle =\left\langle
y,y^{\ast }\right\rangle ,\quad \forall y^{\ast }\in Y^{\ast }.  \tag{2.2}
\end{equation}
\end{notation}

Let $f:D\left( f\right) \subseteq X\longrightarrow Y$ be a nonlinear bounded
operator and the following conditions hold

1) $f:\mathcal{M}_{0}\subseteq D\left( f\right) \longrightarrow Y$ is a
weakly compact (weakly "continuous") mapping, i.e. for any weakly
convergence sequence $\left\{ x_{m}\right\} _{m=1}^{\infty }\subset \mathcal{%
M}_{0}$ in $\mathcal{M}_{0}$ (i.e. $x_{m}\overset{\mathcal{M}_{0}}{%
\rightharpoonup }x_{0}\in \mathcal{M}_{0}$) there is a subsequence $\left\{
x_{m_{k}}\right\} _{k=1}^{\infty }\subseteq \left\{ x_{m}\right\}
_{m=1}^{\infty }$ such that $f\left( x_{m_{k}}\right) \overset{Y}{%
\rightharpoonup }f\left( x_{0}\right) $ weakly in $Y$ (or for a general
sequence if $\mathcal{M}_{0}$ is not a separable space) and $\mathcal{M}_{0}$
is a weakly complete $pn-$space;

2) there exist a mapping $g:X_{0}\subseteq X\longrightarrow Y^{\ast }$ and a
continuous function $\varphi :R_{+}^{1}\longrightarrow R^{1}$ nondecreasing
for $\tau \geq \tau _{0}\geq 0$ \& $\varphi \left( \tau _{1}\right) >0$ for
a number $\tau _{1}>0,$ such that $g$ generates a "coercive" pair with $f$ \
in a generalized sense on the topological space $X_{1}\subseteq X_{0}\cap 
\mathcal{M}_{0}$, i.e. 
\begin{equation*}
\left\langle f\left( x\right) ,g\left( x\right) \right\rangle \geq \varphi
\left( \lbrack x]_{\mathcal{M}_{0}}\right) [x]_{\mathcal{M}_{0}},\quad
\forall x\in X_{1},
\end{equation*}%
where $X_{1}$ is a topological space such that $\overline{X_{1}}%
^{X_{0}}\equiv X_{0}$ and $\overline{X_{1}}^{\mathcal{M}_{0}}\equiv \mathcal{%
M}_{0}$, and\textit{\ }$\left\langle \cdot ,\cdot \right\rangle $ is a%
\textit{\ }dual form of the pair $\left( Y,Y^{\ast }\right) $. Moreover one
of the following conditions $\left( \alpha \right) $ or $\left( \beta
\right) $ holds:

$\left( \alpha \right) $ if $g\equiv L$ is a linear continuous operator,
then $\mathcal{M}_{0}$ is a \textquotedblright reflexive\textquotedblright\
space (see [21, 22]), $X_{0}\equiv X_{1}\subseteq \mathcal{M}_{0}$ is a
separable topological vector space which is dense in $\mathcal{M}_{0}$ and $%
\ker L^{\ast }=\left\{ 0\right\} $.

$\left( \beta \right) $ if $g$ is a bounded operator (in general,
nonlinear), then $Y$ is a reflexive separable space, $g\left( X_{1}\right) $
contains an everywhere dense linear manifold of $Y^{\ast }$ and $g^{-1}$ is
a weakly compact (weakly continuous) operator from $Y^{\ast }$ to $\mathcal{M%
}_{0}$.

\begin{theorem}
\textit{Let conditions 1 and 2 hold. Then equation (2.1) (or (2.2))\ is
solvable in }$\mathcal{M}_{0}$\textit{\ for any} $y\in Y$ \textit{satisfying
the following inequation: there exists} $r>0$ such that\textit{\ the
inequality} 
\begin{equation}
\varphi \left( \lbrack x]_{\mathcal{M}_{0}}\right) [x]_{\mathcal{M}_{0}}\geq
\left\langle y,g\left( x\right) \right\rangle ,\text{ for}\quad \forall x\in
X_{1}\quad \text{with}\quad \lbrack x]_{\mathcal{M}}\geq r.  \tag{2.3}
\end{equation}%
holds.
\end{theorem}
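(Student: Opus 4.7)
The proof strategy is a Galerkin-type approximation: exploit the coercivity pair $(f,g)$ on $X_1$ for a priori estimates, invoke the "reflexivity" hypotheses to extract a weakly convergent subsequence, and use weak compactness of $f$ together with the density assumptions to identify the weak limit with a solution of (2.1)/(2.2).

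First I would fix a countable family $\{e_k\}\subset X_1$ whose finite spans $X_1^{(n)}:=\mathrm{span}\{e_1,\dots,e_n\}$ are dense in $X_0$; this exists by the separability/density conditions in case $(\alpha)$, and by choosing the $e_k$ so that $\{g(e_k)\}$ spans a dense linear manifold in the separable reflexive $Y^{*}$ in case $(\beta)$. For each $n$ I look for $x_n\in X_1^{(n)}$ satisfying
\[
\langle f(x_n)-y,g(e_k)\rangle=0,\qquad k=1,\dots,n.
\]
In case $(\alpha)$, writing $x=\sum c_k e_k$ and $F_n(c)_k:=\langle f(x)-y,Le_k\rangle$, coercivity gives
\[
\sum_{k=1}^{n}c_k F_n(c)_k=\langle f(x)-y,Lx\rangle\geq \varphi([x]_{\mathcal{M}_0})[x]_{\mathcal{M}_0}-\langle y,Lx\rangle\geq 0
\]
whenever $[x]_{\mathcal{M}_0}\geq r$, by (2.3); the standard acute-angle / Brouwer lemma then furnishes a zero $x_n$ with $[x_n]_{\mathcal{M}_0}\leq r$. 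In case $(\beta)$ the same conclusion is obtained via the analogous reduction on finite-dimensional subspaces of $Y^{*}$ spanned by $\{g(e_k)\}$, using weak continuity of $g^{-1}$ to pull the approximants back to $\mathcal{M}_0$.

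Since $[x_n]_{\mathcal{M}_0}\leq r$ uniformly, I extract $x_{n_j}\rightharpoonup x_0\in\mathcal{M}_0$ by "reflexivity" of $\mathcal{M}_0$ in case $(\alpha)$, respectively by taking a weak limit of $g(x_n)$ in the reflexive $Y^{*}$ and applying weak continuity of $g^{-1}$ in case $(\beta)$. Condition 1 now yields $f(x_{n_j})\rightharpoonup f(x_0)$ in $Y$ along a further subsequence, so for every fixed $k$
\[
\langle f(x_0)-y,g(e_k)\rangle=\lim_{j\to\infty}\langle f(x_{n_j})-y,g(e_k)\rangle=0.
\]

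It remains to upgrade this identity from the test family $\{g(e_k)\}$ to arbitrary $y^{*}\in Y^{*}$. In case $(\beta)$ this is immediate since $\mathrm{span}\{g(e_k)\}$ is dense in $Y^{*}$ and $f(x_0)-y\in Y$. In case $(\alpha)$, linearity of $g=L$ and density of $X_1=X_0$ in $\mathcal{M}_0$ first give $\langle f(x_0)-y,L\xi\rangle=0$ for every $\xi\in X_0$; the hypothesis $\ker L^{*}=\{0\}$ then forces $\overline{\mathrm{Im}\,L}=Y^{*}$ (Hahn--Banach), so (2.2) holds for all $y^{*}\in Y^{*}$ and (2.1) is solved in $\mathcal{M}_0$. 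The step I expect to be the main obstacle is the finite-dimensional existence/uniform-bound argument in case $(\beta)$: because $g$ is nonlinear, the Galerkin parametrization is not a simple coordinate representation and one must combine boundedness of $g$, weak continuity of $g^{-1}$, and the coercivity estimate (2.3) with care to produce a Brouwer-type fixed point lying inside $\{[x]_{\mathcal{M}_0}\leq r\}$; the passage to the limit and the density cleanup in case $(\alpha)$ are comparatively routine once the a priori bound is in hand.
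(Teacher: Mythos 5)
Your proposal follows essentially the same route as the paper: a Galerkin scheme tested against $g(e_k)$ (respectively against a complete system $\{y_k^{\ast}\}\subset g(X_1)$ in $Y^{\ast}$, with approximants pulled back via $g^{-1}$ in case $(\beta)$), the acute-angle/Brouwer lemma combined with the coercivity inequality (2.3) for existence and the uniform bound, then weak compactness of $f$ (and of $g^{-1}$) to pass to the limit, finishing with the density of $\mathrm{Im}\,g$ in $Y^{\ast}$ (via $\ker L^{\ast}=\{0\}$ in case $(\alpha)$). This matches the paper's argument; you are in fact somewhat more explicit than the paper on the final identification step, which it delegates to references.
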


\begin{proof}
Assume that conditions 1 and 2 ($\alpha $)\textit{\ }are fulfilled and $y\in
Y$ such that (2.3) holds. We are going to use Galerkin's approximation
method. Let $\left\{ x^{k}\right\} _{k=1}^{\infty }$ be a complete system in
the (separable) space $X_{1}\equiv X_{0}$. Then we are looking for
approximate solutions in the form $x_{m}=\overset{m}{\underset{k=1}{\sum }}%
c_{mk}x^{k},$ where $c_{mk}$ are unknown coefficients, which can be
determined from the system of algebraic equations 
\begin{equation}
\Phi _{k}\left( c_{m}\right) :=\left\langle f\left( x_{m}\right) ,g\left(
x^{k}\right) \right\rangle -\left\langle y,g\left( x^{k}\right)
\right\rangle =0,\quad k=1,2,...,m  \tag{2.4}
\end{equation}%
where $c_{m}\equiv \left( c_{m1},c_{m2},...,c_{mm}\right) $.

We observe that the mapping \ $\Phi \left( c_{m}\right) :=\left( \Phi
_{1}\left( c_{m}\right) ,\Phi _{2}\left( c_{m}\right) ,...,\Phi _{m}\left(
c_{m}\right) \right) $ is continuous by virtue of condition 1. (2.3) implies
the existence of such $r=r\left( \left\Vert y\right\Vert _{Y}\right) >0$
that the \textquotedblright acute angle\textquotedblright\ condition is
fulfilled for all $x_{m}$ with $\left[ x_{m}\right] _{\mathcal{M}_{0}}\geq r$
, i.e. for any $c_{m}\in S_{r_{1}}^{R^{m}}\left( 0\right) \subset R^{m}$, $%
r_{1}\geq r$ the inequality 
\begin{equation*}
\overset{m}{\underset{k=1}{\sum }}\left\langle \Phi _{k}\left( c_{m}\right)
,c_{mk}\right\rangle \equiv \left\langle f\left( x_{m}\right) ,g\left( 
\overset{m}{\underset{k=1}{\sum }}c_{mk}x^{k}\right) \right\rangle
-\left\langle y,g\left( \overset{m}{\underset{k=1}{\sum }}c_{mk}x^{k}\right)
\right\rangle =\quad
\end{equation*}%
\begin{equation*}
\left\langle f\left( x_{m}\right) ,g\left( x_{m}\right) \right\rangle
-\left\langle y,g\left( x_{m}\right) \right\rangle \geq 0,\quad \forall
c_{m}\in 
\mathbb{R}
^{m},\left\Vert c_{m}\right\Vert _{%
\mathbb{R}
^{m}}=r_{1}.
\end{equation*}%
holds. The solvability of system (2.4) for each $m=1,2,\ldots $ follows from
a well-known \textquotedblleft acute angle\textquotedblright\ lemma ([10, 21
- 23]),\ which is equivalent to the Brouwer's fixed-point theorem. Thus, $%
\left\{ x_{m}\left\vert ~m\geq \right. 1\right\} $ is the sequence of
approximate solutions which are contained in a bounded subset of the space $%
\mathcal{M}_{0}$. Further arguments are analogous to those from [10, 23]
therefore we omit them. It remains to pass to the limit in (2.4)\ by $m$ and
use the weak convergency of a subsequence of the sequence $\left\{
x_{m}\left\vert ~m\geq \right. 1\right\} $, the weak compactness of the
mapping $f$, and the completeness of the system $\left\{ x^{k}\right\}
_{k=1}^{\infty }$in the space $X_{1}$.

Hence we get the limit element $x_{0}=w-\underset{j\nearrow \infty }{\lim }%
x_{m_{j}}\in S_{0}$ which is the solution of the equation 
\begin{equation}
\left\langle f\left( x_{0}\right) ,g\left( x\right) \right\rangle
=\left\langle y,g\left( x\right) \right\rangle ,\quad \forall x\in X_{0}, 
\tag{2.5}
\end{equation}%
or of the equation 
\begin{equation}
\left\langle g^{\ast }\circ f\left( x_{0}\right) ,x\right\rangle
=\left\langle g^{\ast }\circ y,x\right\rangle ,\quad \forall x\in X_{0}. 
\tag{2.5'}
\end{equation}

In the second case, i.e. when conditions 1 and 2 ($\beta $)\ are fulfilled
and $y\in Y$ such that (2.3) holds, we suppose that the approximate
solutions are searched in the form 
\begin{equation}
x_{m}=g^{-1}\left( \overset{m}{\underset{k=1}{\sum }}c_{mk}y_{k}^{\ast
}\right) \equiv g^{-1}\left( y_{\left( m\right) }^{\ast }\right) ,\quad
i.e.\ x_{m}=g^{-1}\left( y_{\left( m\right) }^{\ast }\right)  \tag{2.6}
\end{equation}%
where $\left\{ y_{k}^{\ast }\right\} _{k=1}^{\infty }\subset Y^{\ast }$ is a
complete system in the (separable) space $Y^{\ast }$ and belongs to $g\left(
X_{1}\right) $. In this case the unknown coefficients $c_{mk}$, that might
be determined from the system of algebraic equations 
\begin{equation}
\widetilde{\Phi }_{k}\left( c_{m}\right) :=\left\langle f\left( x_{m}\right)
,y_{k}^{\ast }\right\rangle -\left\langle y,y_{k}^{\ast }\right\rangle
=0,\quad k=1,2,...,m  \tag{2.7}
\end{equation}%
with $c_{m}\equiv \left( c_{m1},c_{m2},...,c_{mm}\right) $, from which under
the our conditions we get 
\begin{equation}
\left\langle f\left( x_{m}\right) ,y_{k}^{\ast }\right\rangle -\left\langle
y,y_{k}^{\ast }\right\rangle =\left\langle f\left( g^{-1}\left( y_{\left(
m\right) }^{\ast }\right) \right) ,y_{k}^{\ast }\right\rangle -\left\langle
y,y_{k}^{\ast }\right\rangle =0,  \tag{2.7'}
\end{equation}%
for $k=1,2,...,m$.

As above we observe that the mapping \ 
\begin{equation*}
\widetilde{\Phi }\left( c_{m}\right) :=\left( \widetilde{\Phi }_{1}\left(
c_{m}\right) ,\widetilde{\Phi }_{2}\left( c_{m}\right) ,...,\widetilde{\Phi }%
_{m}\left( c_{m}\right) \right)
\end{equation*}%
is continuous by virtue of conditions 1 and 2($\beta $). (2.3) implies the
existence of such $\widetilde{r}>0$ that the \textquotedblright acute
angle\textquotedblright\ condition is fulfilled for all $y_{\left( m\right)
}^{\ast }$ with $\left\Vert y_{\left( m\right) }^{\ast }\right\Vert
_{Y^{\ast }}\geq \widetilde{r}$ , i.e. for any $c_{m}\in
S_{r_{1}}^{R^{m}}\left( 0\right) \subset R^{m}$, $\widetilde{r}_{1}\geq 
\widetilde{r}$ the inequality 
\begin{equation*}
\overset{m}{\underset{k=1}{\sum }}\left\langle \widetilde{\Phi }_{k}\left(
c_{m}\right) ,c_{mk}\right\rangle \equiv \left\langle f\left( x_{m}\right) ,%
\overset{m}{\underset{k=1}{\sum }}c_{mk}y_{k}^{\ast }\right\rangle
-\left\langle y,\overset{m}{\underset{k=1}{\sum }}c_{mk}y_{k}^{\ast
}\right\rangle =\quad
\end{equation*}%
\begin{equation*}
\left\langle f\left( g^{-1}\left( y_{\left( m\right) }^{\ast }\right)
\right) ,y_{\left( m\right) }^{\ast }\right\rangle -\left\langle y,y_{\left(
m\right) }^{\ast }\right\rangle =\left\langle f\left( x_{m}\right) ,g\left(
x_{m}\right) \right\rangle -\left\langle y,g\left( x_{m}\right)
\right\rangle \geq 0,
\end{equation*}%
\begin{equation*}
\forall c_{m}\in 
\mathbb{R}
^{m},\left\Vert c_{m}\right\Vert _{%
\mathbb{R}
^{m}}=\widetilde{r}_{1}.
\end{equation*}%
holds by virtue of the conditions. Consequently the solvability of the
system (2.7) (or (2.7')) for each $m=1,2,\ldots $ follows from the
\textquotedblleft acute angle\textquotedblright\ lemma, as above. Thus, we
obtain $\left\{ y_{\left( m\right) }^{\ast }\left\vert ~m\geq \right.
1\right\} $ is the sequence of the approximate solutions of system (2.7'),
that is contained in a bounded subset of $Y^{\ast }$. From here it follows
there is a subsequence $\left\{ y_{\left( m_{j}\right) }^{\ast }\right\}
_{j=1}^{\infty }$ of the sequence $\left\{ y_{\left( m\right) }^{\ast
}\left\vert ~m\geq \right. 1\right\} $ such that it is weakly convergent in $%
Y^{\ast }$, and consequently the sequence $\left\{ x_{m_{j}}\right\}
_{j=1}^{\infty }\equiv \left\{ g^{-1}\left( y_{\left( m_{j}\right) }^{\ast
}\right) \right\} _{j=1}^{\infty }$ weakly converges in the space $\mathcal{M%
}_{0}$ by the condition 2($\beta $) (maybe after passing to the subsequence
of it). It remains to pass to the limit in (2.7')\ by $j$ and use a weak
convergency of a subsequence of the sequence $\left\{ y_{\left( m\right)
}^{\ast }\left\vert ~m\geq \right. 1\right\} $, the weak compactness of
mappings $f$ and $g^{-1}$, and next the completeness of the system $\left\{
y_{k}^{\ast }\right\} _{k=1}^{\infty }$in the space $Y^{\ast }$.

Hence we get the limit element $x_{0}=w-\underset{j\nearrow \infty }{\lim }%
x_{m_{j}}$ $=w-\underset{j\nearrow \infty }{\lim }g^{-1}\left( y_{\left(
m_{j}\right) }^{\ast }\right) \in \mathcal{M}_{0}$ and it is the solution of
the equation 
\begin{equation}
\left\langle f\left( x_{0}\right) ,y^{\ast }\right\rangle =\left\langle
y,y^{\ast }\right\rangle ,\quad \forall y^{\ast }\in Y^{\ast }.  \tag{2.8}
\end{equation}%
Q.E.D.\footnote{%
See also, Soltanov K.N., On Noncoercive Semilinear Equations, Journal- NA:
Hybrid Systems, (2008), 2, 2, 344-358.}
\end{proof}

\begin{remark}
\textit{It is obvious that if there exists a function }$\psi
:R_{+}^{1}\longrightarrow R_{+}^{1}$, $\psi \in C^{0}$\textit{such that }$%
\psi \left( \xi \right) =0\Longleftrightarrow \xi =0$\textit{\ and if the
following inequality\ is fulfilled }$\left\Vert x_{1}-x_{2}\right\Vert
_{X}\leq \psi \left( \left\Vert f\left( x_{1}\right) -f\left( x_{2}\right)
\right\Vert _{Y}\right) $\textit{\ for all }$x_{1},x_{2}\in \mathcal{M}_{0}$%
\textit{\ then solution of equation (2.2) is unique. }
\end{remark}

\begin{notation}
It should be noted the spaces of the $pn-$space type often arising from
nonlinear problems with nonlinear main parts, for example,

1) the equation of the nonlinear filtration or diffusion that have the
expression: 
\begin{equation*}
\frac{\partial u}{\partial t}-\nabla \cdot \left( g\left( u\right) \nabla
u\right) +h\left( t,x,u\right) =0,\quad u\left\vert \ _{\partial \Omega
\times \left[ 0.T\right] }\right. =0,
\end{equation*}%
\begin{equation*}
u\left( 0,x\right) =u_{0}\left( x\right) ,\quad x\in \Omega \subset 
\mathbb{R}
^{n},\quad n\geq 1
\end{equation*}%
where $g:%
\mathbb{R}
\longrightarrow 
\mathbb{R}
_{+}$ is a convex function ($g\left( s\right) \equiv \left\vert s\right\vert
^{\rho }$, $\rho >0$) and $h\left( t,x,s\right) $ is a Caratheodory
function, in this case it is needed to investigate 
\begin{equation*}
S_{1,\rho ,2}\left( \Omega \right) \equiv \left\{ u\in L^{1}\left( \Omega
\right) \left\vert \ \underset{\Omega }{\dint }g\left( u\left( x\right)
\right) \left\vert \nabla u\right\vert ^{2}dx\right. <\infty ;\ \
u\left\vert \ _{\partial \Omega }\right. =0\right\} ;
\end{equation*}

2) the equation of the Prandtl-von Mises type equation that have the
expression: 
\begin{equation*}
\frac{\partial u}{\partial t}-\left\vert u\right\vert ^{\rho }\Delta
u+h\left( t,x,u\right) =0,\quad u\left\vert \ _{\partial \Omega \times \left[
0.T\right] }\right. =0,
\end{equation*}%
\begin{equation*}
u\left( 0,x\right) =u_{0}\left( x\right) ,\quad x\in \Omega \subset 
\mathbb{R}
^{n},\quad n\geq 1
\end{equation*}%
where $\rho >0$ and $h\left( t,x,s\right) $ is a Caratheodory function, in
this case it is needed to investigate the spaces of the following spaces
type $S_{1,\mu ,q}\left( \Omega \right) $ ($\mu \geq 0,q\geq 1$) and 
\begin{equation*}
S_{\Delta ,\rho ,2}\left( \Omega \right) \equiv \left\{ u\in L^{1}\left(
\Omega \right) \left\vert \ \underset{\Omega }{\dint }\left\vert u\left(
x\right) \right\vert ^{\rho }\left\vert \Delta u\right\vert ^{2}dx\right.
<\infty ;\ \ u\left\vert \ _{\partial \Omega }\right. =0\right\}
\end{equation*}%
etc. \footnote{%
Theorem and the spaces of such type were used earlier in many works see, for
example, [10, 21], and also the following articles with therein references:
\par
Ju. A. Dubinskii - Weakly convergence into nonlinear elliptic and parabolic
equations, Matem. Sborn., (1965), 67, n. 4; Soltanov K.N. - On solvability
some nonlinear parabolic problems with nonlinearitygrowing quickly
po-lynomial functions. Matematczeskie zametki, 32, 6, 1982.
\par
Soltanov K.N. : Some embedding theorems and its applications to nonlinear
equations. Differensial'nie uravnenia, 20, 12, 1984; On nonlinear equations
of the form $F\left( x,u,Du,\Delta u\right) =0$. Matem. Sb. Ac. Sci. Russ,
1993, v.184, n.11 (Russian Acad. Sci. Sb. Math., 80, (1995) ,2; Solvability
nonlinear equations with operators the form of sum the pseudomonotone and
weakly compact., Soviet Math. Dokl.,1992, v.324, n.5,944-948; Nonlinear
equations in nonreflexive Banach spaces and fully nonlinear equations.
Advances in Mathematical Sciences and Applications, 1999, v.9, n 2, 939-972
(joint with J. Sprekels); On some problem with free boundary. Trans. Russian
Ac. Sci., ser. Math., 2002, 66, 4, 155-176 (joint with Novruzov E.).}
\end{notation}

\begin{corollary}
Assume that the conditions of Theorem 2 are fulfilled and \textit{there is a
continuous function }$\varphi _{1}:R_{+}^{1}\longrightarrow R_{+}^{1}$ such
that $\left\Vert g\left( x\right) \right\Vert _{Y^{\ast }}\leq \varphi
_{1}\left( [x]_{S_{0}}\right) $ for any $x\in X_{0}$ and\textit{\ }$\varphi
\left( \tau \right) \nearrow +\infty $\textit{\ }and $\frac{\varphi \left(
\tau \right) \tau }{\varphi _{1}\left( \tau \right) }\nearrow +\infty $%
\textit{\ as }$\tau \nearrow +\infty $. \textit{Then } \textit{equation
(2.2) is solvable in }$\mathcal{M}_{0}$, \textit{for any }$y\in Y$.
\end{corollary}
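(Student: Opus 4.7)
The plan is to derive Corollary 1 directly from Theorem 2 by showing that, under the additional growth hypotheses on $\varphi$ and $\varphi_{1}$, the coercivity--compatibility inequality (2.3) is automatically satisfied for every $y\in Y$, so that the universal solvability in $\mathcal{M}_{0}$ reduces to an application of Theorem 2.

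First, I would fix an arbitrary $y\in Y$ and bound the right-hand side of (2.3). Using the duality pairing between $Y$ and $Y^{\ast}$ and the assumed estimate $\|g(x)\|_{Y^{\ast}}\leq \varphi_{1}([x]_{\mathcal{M}_{0}})$ for $x\in X_{0}$ (I read $S_{0}$ as a typographical variant of $\mathcal{M}_{0}$, which is consistent with how $\varphi$ is evaluated in (2.3)), I get
\begin{equation*}
\langle y,g(x)\rangle \leq \|y\|_{Y}\,\|g(x)\|_{Y^{\ast}} \leq \|y\|_{Y}\,\varphi_{1}([x]_{\mathcal{M}_{0}}),\qquad \forall x\in X_{1}.
\end{equation*}
Thus inequality (2.3) will follow from the stronger pointwise inequality
\begin{equation*}
\varphi([x]_{\mathcal{M}_{0}})\,[x]_{\mathcal{M}_{0}} \geq \|y\|_{Y}\,\varphi_{1}([x]_{\mathcal{M}_{0}}),
\end{equation*}
which rearranges, on the set where $\varphi_{1}([x]_{\mathcal{M}_{0}})>0$, to
\begin{equation*}
\frac{\varphi([x]_{\mathcal{M}_{0}})\,[x]_{\mathcal{M}_{0}}}{\varphi_{1}([x]_{\mathcal{M}_{0}})} \geq \|y\|_{Y}.
\end{equation*}

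Next I would invoke the hypothesis that $\dfrac{\varphi(\tau)\tau}{\varphi_{1}(\tau)}\nearrow +\infty$ as $\tau\nearrow +\infty$. By this monotone divergence, given the finite number $\|y\|_{Y}$ we may choose $r=r(\|y\|_{Y})>0$ so large that for every $\tau\geq r$ the ratio $\varphi(\tau)\tau/\varphi_{1}(\tau)$ exceeds $\|y\|_{Y}$. Consequently, for all $x\in X_{1}$ with $[x]_{\mathcal{M}_{0}}\geq r$, condition (2.3) holds. (Some minor care is needed for small $\tau$ where $\varphi_{1}$ might vanish or the coercivity function $\varphi$ is not yet nondecreasing; but since (2.3) is only required for $[x]_{\mathcal{M}_{0}}\geq r$, we simply enlarge $r$ beyond the monotonicity threshold $\tau_{0}$ appearing in condition 2.)

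Having verified (2.3) for the prescribed $y\in Y$ with this $r$, I would then apply Theorem 2 directly to conclude that equation (2.2) admits a solution $x_{0}\in\mathcal{M}_{0}$. Since $y$ was arbitrary, this gives solvability for every $y\in Y$. I do not anticipate a genuine obstacle here: the corollary is essentially a coercivity-upgrade of Theorem 2, and the only subtlety is handling the small-$\tau$ regime of the growth functions, which is absorbed by choosing $r$ sufficiently large.
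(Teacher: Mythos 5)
Your proof is correct and is exactly the intended argument (the paper itself omits the proof of this corollary): the duality bound $\langle y,g(x)\rangle\leq\|y\|_{Y}\varphi_{1}([x]_{\mathcal{M}_{0}})$ together with the monotone divergence of $\varphi(\tau)\tau/\varphi_{1}(\tau)$ yields (2.3) for every $y\in Y$ with $r=r(\|y\|_{Y})$ large enough, after which Theorem 2 applies. Your reading of $[x]_{S_{0}}$ as $[x]_{\mathcal{M}_{0}}$ and your handling of the small-$\tau$ regime are both appropriate.
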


\section{Solvability of Problem (1.6) - (1.7)}

A solution of problem (1.6) - (1.7) we will understand in following sense.

\begin{definition}
A function $u\left( t,x\right) $ of the space ${\Large P}_{1,\left(
p-2\right) q,q,2}^{1}\left( Q\right) $ is called a solution of problem (1.6)
- (1.7) if $u\left( t,x\right) $ satisfies the following equality 
\begin{equation}
\left[ \frac{\partial u}{\partial t},v\right] -\left[ \overset{n}{\underset{%
i=1}{\sum }}D_{i}\left( \left\vert D_{i}u\right\vert ^{p-2}D_{i}u\right) ,v%
\right] =\left[ h,v\right] ,\quad \forall v\in L_{p}\left( Q\right) , 
\tag{3.1}
\end{equation}%
where 
\begin{equation*}
{\Large P}_{1,\left( p-2\right) q,q,2}^{1}\left( Q\right) \equiv L_{p}\left(
0,T;\overset{0}{S}{}_{1,\left( p-2\right) q,q}^{1}\left( \Omega \right)
\right) \cap W_{2}^{1}\left( 0,T;L_{2}\left( \Omega \right) \right)
\end{equation*}%
\begin{equation*}
S_{1,\alpha ,\beta }^{1}\left( \Omega \right) \equiv \left\{ u\left(
t,x\right) \left\vert ~\left[ u\right] _{S_{1,\alpha ,\beta }^{1}}^{\alpha
+\beta }=\underset{i=1}{\overset{n}{\sum }}\left\Vert D_{i}u\right\Vert
_{\alpha +\beta }^{\alpha +\beta }+\right. \right.
\end{equation*}%
\begin{equation*}
\left. \underset{i,j=1}{\overset{n}{\sum }}\left\Vert \left\vert
D_{i}u\right\vert ^{\frac{\alpha }{\beta }}D_{j}D_{i}u\right\Vert _{\beta
}^{\beta }<\infty \right\} ,\quad \alpha \geq 0,\ \beta \geq 1.
\end{equation*}%
and $\left[ \cdot ,\cdot \right] $ denotes dual form for the pair $\left(
L_{q}\left( Q\right) ,L_{p}\left( Q\right) \right) $ as in the section 1.
\end{definition}

For the study of problem (1.6) - (1.7) we use Theorem 2 and Corollary 1 of
the previous section. For applying these results to problem (1.6) - (1.7),
we will choose the corresponding spaces and mappings: 
\begin{equation*}
\mathcal{M}_{0}\equiv {\Large P}_{1,\left( p-2\right) q,q,2}^{1}\left(
Q\right) \equiv L_{p}\left( 0,T;\overset{0}{S}{}_{1,\left( p-2\right)
q,q}^{1}\left( \Omega \right) \right) \cap W_{2}^{1}\left( 0,T;L_{2}\left(
\Omega \right) \right) ,
\end{equation*}%
\begin{equation*}
\Phi \left( u\right) \equiv -\ \overset{n}{\underset{i=1}{\sum }}D_{i}\left(
\left\vert D_{i}u\right\vert ^{p-2}D_{i}u\right) ,\quad \gamma _{0}u\equiv
u\left( 0,x\right) ,
\end{equation*}%
\noindent {}%
\begin{equation*}
f\left( \cdot \right) \equiv \left\{ \frac{\partial \cdot }{\partial t}+\Phi
\left( \cdot \right) ;\ \gamma _{0}\cdot \right\} ,\quad g\left( \cdot
\right) \equiv \left\{ \frac{\partial \cdot }{\partial t}-\Delta \cdot
;\quad \gamma _{0}\cdot \right\} ,
\end{equation*}%
\begin{equation*}
X_{0}\equiv W_{p}^{1}\left( 0,T;L_{p}\left( \Omega \right) \right) \cap 
\widetilde{X};\ 
\end{equation*}%
\begin{equation*}
X_{1}\equiv X_{0}\cap \left\{ u\left( t,x\right) \left\vert \frac{\partial u%
}{\partial \widehat{\nu }}\left\vert \ _{\Gamma }\right. =0\right. \right\} ;
\end{equation*}%
\begin{equation*}
Y\equiv L_{q}\left( Q\right) ,\ q=p^{\prime },\widetilde{X}\equiv
L_{p}\left( 0,T;W_{p}^{2}\left( \Omega \right) \right) \cap \left\{ u\left(
t,x\right) \left\vert \frac{\partial u}{\partial \nu }\left\vert \ _{\Gamma
}\right. =0\right. \right\}
\end{equation*}%
here 
\begin{equation*}
\overset{0}{S}{}_{1,\left( p-2\right) q,q}^{1}\left( \Omega \right) \equiv
S_{1,\left( p-2\right) q,q}^{1}\left( \Omega \right) \cap \left\{ u\left(
t,x\right) \left\vert \frac{\partial u}{\partial \widehat{\nu }}\left\vert \
_{\partial \Omega }\right. =0\right. \right\}
\end{equation*}

Thus, as we can see from the above denotations, mapping $f$ is defined by
problem (1.6)-(1.7) and mapping $g$ is defined by the following problem 
\begin{equation}
\frac{\partial u}{\partial t}-\Delta u=v\left( t,x\right) ,\quad \left(
t,x\right) \in Q,  \tag{3.2}
\end{equation}%
\begin{equation}
\gamma _{0}u\equiv u\left( 0,x\right) =u_{0}\left( x\right) ,\ \frac{%
\partial u}{\partial \nu }\left\vert \ _{\Gamma }\right. =0.  \tag{3.3}
\end{equation}

As known (see, [1, 5, 6, 12]), problem (3.2)-(3.3) is solvable in the space 
\begin{equation*}
X_{0}\equiv W_{p}^{1}\left( 0,T;L_{p}\left( \Omega \right) \right) \cap
L_{p}\left( 0,T;W_{p}^{2}\left( \Omega \right) \right) \cap \left\{ u\left(
t,x\right) \left\vert \frac{\partial u}{\partial \nu }\left\vert \ _{\Gamma
}\right. =0\right. \right\}
\end{equation*}%
for any $v\in L_{p}\left( Q\right) $, $u_{0}\in W_{p}^{1}\left( \Omega
\right) $.

Now we will demonstrate that all conditions of Theorem 2 and also of
Corollary 1 are fulfilled.

\begin{proposition}
Mappings $f$ and $g$ , defined above, generate a "coercive" pair on $X_{1}$
in the generalized sense, and moreover the statement of Corollary 1 is valid.
\end{proposition}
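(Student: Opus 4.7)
The plan is to compute $\langle f(u), g(u)\rangle = \int_Q (u_t + \Phi(u))(u_t - \Delta u)\,dx\,dt$ for $u \in X_1$ and bound it below by $\varphi([u]_{\mathcal{M}_0})[u]_{\mathcal{M}_0}$. I would expand the pairing into four summands and handle each separately. The first, $\int_Q u_t^2$, is already $\|u_t\|_{L_2(Q)}^2$. The second, $-\int_Q u_t\,\Delta u$, becomes $\tfrac12\|\nabla u(T)\|_{L_2(\Omega)}^2 - \tfrac12\|\nabla u_0\|_{L_2(\Omega)}^2$ after one integration by parts in $x$ using $\partial u/\partial\nu = 0$ and one in $t$. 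The third, $\int_Q \Phi(u)\,u_t$, is rewritten by integration by parts in $x$ (with the nonlinear Neumann condition (1.3) killing the boundary term) as $\sum_i \int_Q |D_iu|^{p-2}D_iu\cdot D_iu_t$, and then integrated in $t$ to give $\sum_i \tfrac1p \bigl[\|D_iu(T)\|_{L_p(\Omega)}^p - \|D_iu_0\|_{L_p(\Omega)}^p\bigr]$.

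The decisive fourth summand is $I_4 = -\int_Q \Phi(u)\,\Delta u$. I would integrate by parts twice: first in $x_i$, using (1.3) to eliminate the surface integral and turning $I_4$ into $-\int_Q \sum_i |D_iu|^{p-2}D_iu \cdot D_i(\Delta u)$; then, after commuting $D_i\Delta u = \Delta D_iu$, in $x_j$, using $\partial u/\partial\nu = 0$ to handle the boundary integral. Using $D_j(|D_iu|^{p-2}D_iu) = (p-1)|D_iu|^{p-2}D_jD_iu$, the bulk contribution becomes
\[
I_4 \;=\; (p-1)\int_Q \sum_{i,j} |D_iu|^{p-2}(D_jD_iu)^2\,dx\,dt,
\]
that is, $(p-1)$ times the square of the $L_2\bigl(0,T;\widetilde{S}_{1,p-2,2}^1(\Omega)\bigr)$-seminorm of $u$. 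Collecting all four pieces and absorbing $u_0,u_1$ into a data constant $C$ yields
\[
\langle f(u), g(u)\rangle \;\geq\; \|u_t\|_{L_2(Q)}^2 \,+\, (p-1)\sum_{i,j}\int_Q |D_iu|^{p-2}(D_jD_iu)^2\,dx\,dt \,-\, C.
\]

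From here the plan splits into (i) identifying this lower bound with $\varphi([u]_{\mathcal{M}_0})[u]_{\mathcal{M}_0}$ for a suitable coercivity function $\varphi$, and (ii) verifying the growth conditions of Corollary~1. For (i), the $W_2^1(0,T;L_2)$-part of $[u]_{\mathcal{M}_0}$ is controlled directly by $\|u_t\|_{L_2(Q)}$ together with the initial trace, while the $L_p\bigl(0,T;\overset{0}{S}_{1,(p-2)q,q}^1\bigr)$-part is extracted from the weighted Hessian term by H\"older's inequality with exponent $p/2$, which upgrades the integrand $|D_iu|^{p-2}(D_jD_iu)^2$ to the $L_q$-type integrand $|D_iu|^{(p-2)q}|D_jD_iu|^q$ defining $S_{1,(p-2)q,q}^1$, at the cost of a factor involving $\|D_iu\|_{L_p(L_p)}$ that is itself bounded by the classical energy estimate for~(1.6). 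The resulting $\varphi(\tau)$ is of power type with $\varphi(\tau)\nearrow\infty$. For (ii), the parabolic $L_p$-regularity estimate for the Neumann problem associated with $g$ (references [1, 5, 6, 12] of the paper) supplies a linear $\varphi_1$ with $\|g(u)\|_{L_p(Q)}\leq \varphi_1([u]_{\mathcal{M}_0})$, from which $\varphi(\tau)\tau/\varphi_1(\tau)\to\infty$ is immediate. I expect the main technical obstacle to be the second integration by parts in $I_4$: the surface integrand $\sum_{i,j}|D_iu|^{p-2}D_iu\cdot D_jD_iu\cos(\nu,x_j)$ is not killed pointwise by either Neumann condition present in $X_1$, so a clean justification will most likely proceed via a density argument approximating $u\in X_1$ by smoother functions for which the residual boundary contribution vanishes and then passing to the limit in the bulk identity.
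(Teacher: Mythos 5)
Your proposal follows essentially the same route as the paper: the same expansion of $\left\langle f(u),g(u)\right\rangle$ into the four cross terms, the same integrations by parts yielding $\Vert u_{t}\Vert _{L_{2}}^{2}$, $\tfrac{1}{p}\sum_{i}\Vert D_{i}u\Vert _{p}^{p}(t)$, $\tfrac{1}{2}\sum_{i}\Vert D_{i}u\Vert _{2}^{2}(t)$ and the weighted Hessian term $(p-1)\sum_{i,j}\int \vert D_{i}u\vert ^{p-2}(D_{j}D_{i}u)^{2}$, and the same H\"{o}lder-type passage to the $S_{1,(p-2)q,q}^{1}$-norm that the paper relegates to a footnote. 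If anything you are more careful than the paper, which disposes of the surface term $\sum_{i,j}\vert D_{i}u\vert ^{p-2}D_{i}u\,D_{j}D_{i}u\cos (\nu ,x_{j})$ arising in the double integration by parts with the phrase ``after certain action and in view of the boundary conditions,'' whereas you correctly flag it as the step needing a density (or additional boundary) argument.
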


\begin{proof}
Let $u\in X_{1}$, i.e. 
\begin{equation*}
u\in X_{0}\cap \left\{ u\left( t,x\right) \left\vert \frac{\partial u}{%
\partial \widehat{\nu }}\left\vert \ _{\Gamma }\right. =0\right. \right\} .
\end{equation*}

Consider the dual form $\left\langle f\left( u\right) ,g\left( u\right)
\right\rangle $ for any $u\in X_{1}$. More exactly, it is enough to consider
the dual form in the form 
\begin{equation}
\underset{0}{\overset{t}{\int }}\underset{\Omega }{\int }f\left( u\right) \
g\left( u\right) \ dxd\tau \equiv \left[ f\left( u\right) ,\ g\left(
u\right) \right] _{t}  \tag{*}
\end{equation}%
Hence, if we consider the above expression then after certain action and in
view of the boundary conditions, we get 
\begin{equation*}
\left[ f\left( u\right) ,\ g\left( u\right) \right] _{t}\equiv \left[ \frac{%
\partial u}{\partial t},\ \frac{\partial u}{\partial t}\right] _{t}\ +\left[
\Phi \left( u\right) ,\ \frac{\partial u}{\partial t}\right] _{t}-
\end{equation*}%
\begin{equation*}
\left[ \frac{\partial u}{\partial t},\ \Delta u\right] _{t}-\left[ \Phi
\left( u\right) ,\ \Delta u\right] _{t}+\underset{\Omega }{\int }u_{0}\
u_{0}\ dx=
\end{equation*}%
\begin{equation*}
=\underset{0}{\overset{t}{\int }}\left\Vert \frac{\partial u}{\partial t}%
\right\Vert _{2}^{2}d\tau +\overset{n}{\underset{i=1}{\sum }}\left[ \frac{1}{%
p}\left\Vert D_{i}u\right\Vert _{p}^{p}\left( t\right) +\frac{1}{2}%
\left\Vert D_{i}u\right\Vert _{2}^{2}\left( t\right) \right] +
\end{equation*}%
\begin{equation*}
\left\Vert u_{0}\right\Vert _{2}^{2}+\left( p-1\right) \overset{n}{\underset{%
i,j=1}{\sum }}\ \underset{0}{\overset{t}{\int }}\left\Vert \left\vert
D_{i}u\right\vert ^{\frac{p-2}{2}}D_{i}D_{j}u\right\Vert _{2}^{2}-
\end{equation*}%
\begin{equation}
\overset{n}{\underset{i=1}{\sum }}\left[ \frac{1}{p}\left\Vert
D_{i}u_{0}\right\Vert _{p}^{p}+\frac{1}{2}\left\Vert D_{i}u_{0}\right\Vert
_{2}^{2}\right]  \tag{3.4}
\end{equation}%
here and in (3.5) we denote $\left\Vert \cdot \right\Vert _{p_{1}}\equiv
\left\Vert \cdot \right\Vert _{L_{p_{1}}\left( \Omega \right) }$, $p_{1}\geq
1$.

From here\ it follows, 
\begin{equation*}
\left[ f\left( u\right) ,\ g\left( u\right) \right] \geq c\left( \left\Vert 
\frac{\partial u}{\partial t}\right\Vert _{L_{2}\left( Q\right) }^{2}+%
\overset{n}{\underset{i=1}{\sum }}\left\Vert \left\vert D_{i}u\right\vert ^{%
\frac{p-2}{2}}D_{i}u\right\Vert _{L_{2}\left( Q\right) }^{2}\right) -
\end{equation*}%
\begin{equation*}
c_{1}\left\Vert u_{0}\right\Vert _{W_{p}^{1}}^{p}-c_{2}\geq \widetilde{c}%
\left( \left\Vert \frac{\partial u}{\partial t}\right\Vert _{L_{2}\left(
Q\right) }^{2}+\left[ u\right] _{L_{p}\left( S_{1,\left( p-2\right)
q,q}\right) }^{p}\right) -
\end{equation*}%
\begin{equation*}
c_{1}\left\Vert u_{0}\right\Vert _{W_{p}^{1}}^{p}-c_{2}\geq \widetilde{c}%
\left[ u\right] _{\mathbf{P}_{1,\left( p-2\right) q,q,2}^{1}\left( Q\right)
}^{2}-c_{1}\left\Vert u_{0}\right\Vert _{W_{p}^{1}}^{p}-\widetilde{c}_{2},
\end{equation*}%
which demonstrates fulfillment of the statement of Corollary 1\footnote{%
From definitions of these spaces is easy to see that $S_{1,p-2,2}^{1}\left(
\Omega \right) \subset S_{1,\left( p-2\right) q,q}\left( \Omega \right) $}.
Consequently, Proposition 1 is true.
\end{proof}

Further for the right part of the dual form, we obtain under the conditions
of Proposition 1 (using same way as in the above proof) 
\begin{equation*}
\left\vert \underset{0}{\overset{t}{\int }}\underset{\Omega }{\int }h\left( 
\frac{\partial u}{\partial t}-\Delta u\right) \ dxd\tau \right\vert \leq
C\left( \varepsilon \right) \underset{0}{\overset{t}{\int }}\left\Vert
h\right\Vert _{2}^{2}\ d\tau +
\end{equation*}%
\begin{equation}
\varepsilon \underset{0}{\overset{t}{\int }}\left\Vert \frac{\partial u}{%
\partial t}\right\Vert _{2}^{2}\ d\tau +C\left( \varepsilon _{1}\right) 
\underset{0}{\overset{t}{\int }}\left\Vert h\right\Vert _{W_{q}^{1}}^{q}\
d\tau +\varepsilon _{1}\overset{n}{\underset{i=1}{\sum }}\underset{0}{%
\overset{t}{\int }}\left\Vert D_{i}u\right\Vert _{p}^{p}\ d\tau .  \tag{3.5}
\end{equation}

It is not difficult to see mapping $g$ defined by problem (3.2)-(3.3)
satisfies of the conditions of \ Theorem 2, i.e. $g\left( X_{1}\right) $
contains an everywhere dense linear manifold of $L_{p}\left( Q\right) $ and $%
g^{-1}$ is weakly compact operator from $L_{p}\left( Q\right) $ to $\mathcal{%
M}_{0}\equiv L_{p}\left( 0,T;\overset{0}{S}{}_{1,\left( p-2\right)
q,q}^{1}\left( \Omega \right) \right) $ $\cap $ $W_{2}^{1}\left(
0,T;L_{2}\left( \Omega \right) \right) $.

Thus we have that all conditions of Theorem 2 and Corollary 1 are fulfilled
for the mappings and spaces corresponding to the studied problem.
Consequently, using Theorem 2 and Corollary 1 we obtain the solvability of
problem (1.6)-(1.7) in the space ${\Large P}_{1,\left( p-2\right)
q,q,2}^{1}\left( Q\right) $ for any $h\in L_{2}\left( 0,T;W_{2}^{1}\left(
\Omega \right) \right) $ and $u_{0}\in W_{p}^{1}\left( \Omega \right) $.

Furthermore, from here it follows that the solution of this problem
possesses the complementary smoothness, i.e. $\overset{n}{\underset{i=1}{%
\sum }}D_{i}\left( \left\vert D_{i}u\right\vert ^{p-2}D_{i}u\right) \in
L_{2}\left( Q\right) $ as far as we have $\frac{\partial u}{\partial t}\in
L_{2}\left( Q\right) $ and $h\in L_{2}\left( 0,T;W_{2}^{1}\left( \Omega
\right) \right) $ by virtue of the conditions of the considered problem.

So the following result is proved.

\begin{theorem}
Under the conditions of this section, problem (1.6) - (1.7) is solvable in $%
{\Large P}\left( Q\right) $ for any $u_{0}\in W_{p}^{1}\left( \Omega \right) 
$ and $h\in L_{2}\left( 0,T;W_{2}^{1}\left( \Omega \right) \right) $ where 
\begin{equation*}
{\Large P}\left( Q\right) \equiv L_{p}\left( 0,T;\overset{0}{\widetilde{S}}%
{}_{1,p-2,2}^{1}\left( \Omega \right) \right) \cap W_{2}^{1}\left(
0,T;L_{2}\left( \Omega \right) \right) \cap {\Large P}_{1,\left( p-2\right)
q,q,2}^{1}\left( Q\right) .
\end{equation*}
\end{theorem}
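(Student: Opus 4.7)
The plan is to specialize the abstract solvability scheme of Section~2 (Theorem~2 under condition~2$(\beta)$, combined with Corollary~1) to the parabolic problem $(1.6)$--$(1.7)$ exactly as already prepared in the discussion preceding the statement. In this setup $\mathcal{M}_0\equiv {\Large P}_{1,(p-2)q,q,2}^{1}(Q)$, the nonlinear operator $f$ encodes $(1.6)$ together with the initial-value trace, while $g$ is the linear Neumann heat operator coming from $(3.2)$--$(3.3)$. Once Theorem~2 is applied, it remains to read off the additional smoothness contained in the definition of ${\Large P}(Q)$ directly from the equation.

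First, I record the properties of $g$ needed by condition~2$(\beta)$. By the classical linear parabolic theory cited as $[1,5,6,12]$, problem $(3.2)$--$(3.3)$ has a unique solution in $X_0$ for any $(v,u_0)\in L_p(Q)\times W_p^{1}(\Omega)$; this gives at once that $g(X_1)$ is dense in $Y^{\ast}=L_p(Q)$ and that $g^{-1}:L_p(Q)\to \mathcal{M}_0$ is bounded and weakly continuous. Second, weak compactness of $f$ on $\mathcal{M}_0$ (condition~1) follows from an Aubin--Lions type argument: a bounded sequence in $\mathcal{M}_0$ is bounded in $L_p(0,T;W_p^{1}(\Omega))$ with $\partial_t u\in L_2(Q)$, hence converges strongly in $L_p(Q)$ and a.e.\ along a subsequence, which is enough to pass to the weak $L_q(Q)$ limit in each Nemytskii term $|D_i u|^{p-2}D_i u$.

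Third, the coercive pair condition in the form demanded by Corollary~1 is exactly the content of Proposition~1: identity $(3.4)$ yields
\[
[f(u),g(u)]_t\;\geq\;\widetilde c\,[u]_{\mathcal{M}_0}^{2}-c_1\|u_0\|_{W_p^{1}}^{p}-\widetilde c_2,
\]
so $\varphi(\tau)=\widetilde c\,\tau$, and the trivial bound $\|g(u)\|_{L_p(Q)}\leq C\,[u]_{\mathcal{M}_0}$ produces the ratio $\varphi(\tau)\tau/\varphi_{1}(\tau)\nearrow\infty$ required by Corollary~1. The action of the datum on $g(u)$ is controlled by the estimate $(3.5)$: since $h\in L_2(0,T;W_2^{1}(\Omega))$, choosing $\varepsilon,\varepsilon_{1}$ small absorbs the $\partial_t u$ and $D_i u$ terms into the coercive side and leaves a data-dependent constant, which is precisely inequality $(2.3)$. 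Corollary~1 therefore delivers a solution $u\in\mathcal{M}_0$ of $(3.1)$.

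The final task is to upgrade the regularity from $\mathcal{M}_0$ to ${\Large P}(Q)$, i.e.\ to adjoin $u\in L_p(0,T;\widetilde S_{1,p-2,2}^{1}(\Omega))$. From $(1.6)$ together with $\partial_t u\in L_2(Q)$ and $h\in L_2(Q)$ one immediately reads
\[
\sum_{i=1}^{n} D_i\bigl(|D_i u|^{p-2}D_i u\bigr)\;=\;\frac{\partial u}{\partial t}-h\;\in\;L_2(Q),
\]
which is the needed $L_2$-summability of the principal nonlinear term. The quantitative bound on the anisotropic second-derivative norm $\bigl\||D_i u|^{(p-2)/2}D_j D_i u\bigr\|_{L_2(Q)}$ should then follow by formally differentiating the equation, testing against $-\Delta u$, and integrating by parts using $\partial u/\partial\widehat{\nu}|_\Gamma=0$; this is the same mechanism that already produced the $S_{1,(p-2)q,q}^{1}$ norm in Proposition~1, only with the exponents rearranged. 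I expect this last upgrade to be the main obstacle, because the boundary terms arising from the integration by parts cancel only thanks to the \emph{nonlinear} Neumann condition $(1.7)$, and justifying it on the limit $u$ requires an approximation scheme compatible both with $\partial/\partial\widehat{\nu}$ and with the degenerate weight $|D_i u|^{(p-2)/2}$.
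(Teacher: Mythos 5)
Your overall route --- specializing Theorem~2 under condition 2$(\beta)$ together with Corollary~1 to the pair $f$, $g$ built from (1.6)--(1.7) and the linear Neumann heat problem (3.2)--(3.3), verifying coercivity via Proposition~1 and controlling the datum via (3.5), then reading the $L_2$-summability of $\sum_i D_i(|D_iu|^{p-2}D_iu)=\partial u/\partial t-h$ off the equation --- is exactly the paper's. The gap is in your final paragraph. You treat the weighted second-derivative bound on $\||D_iu|^{(p-2)/2}D_jD_iu\|_{L_2}$ as a separate regularity upgrade to be obtained by ``formally differentiating the equation and testing against $-\Delta u$'', and you flag it as the main unresolved obstacle. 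No such step is needed, and the differentiated-equation argument you sketch would indeed be delicate for the degenerate weight. The quantity you want is already sitting in the coercivity identity (3.4): since $g(u)=\partial u/\partial t-\Delta u$, the cross pairing $-\left[\Phi(u),\Delta u\right]_t$ produces, after integration by parts using the Neumann condition, precisely
\begin{equation*}
(p-1)\sum_{i,j=1}^{n}\int_0^t\bigl\||D_iu|^{\frac{p-2}{2}}D_iD_ju\bigr\|_{L_2(\Omega)}^2\,d\tau
\end{equation*}
on the favorable side of the inequality. Hence the approximate solutions are uniformly bounded in this seminorm, and the bound survives the passage to the weak limit by lower semicontinuity; that is what places the solution in $L_p\bigl(0,T;\overset{0}{\widetilde S}{}_{1,p-2,2}^{1}(\Omega)\bigr)$.

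Note also that your logical order is the reverse of the paper's: the stronger $\widetilde S_{1,p-2,2}^{1}$-type estimate comes first, directly from (3.4), and the $\mathcal{M}_0\equiv{\Large P}_{1,(p-2)q,q,2}^{1}(Q)$ bound is then a consequence via the embedding $S_{1,p-2,2}^{1}(\Omega)\subset S_{1,(p-2)q,q}(\Omega)$ recorded in the paper's footnote, rather than the $\mathcal{M}_0$ membership being the starting point and the anisotropic second-derivative bound an upgrade. Once you replace your last paragraph by this observation, the remaining obstacle you identified disappears and the argument closes along the paper's lines.
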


\section{A priori Estimations for Solutions of Problem (1.4) - (1.5)}

Now, we can investigate the main problem of this article, which is posed for
problem (1.4) - (1.5). We introduce denotations of the mappings $A$ and $f$
\ that are generated by problems (1.4)-(1.5) and (1.6)-(1.7), respectively.

\begin{theorem}
Under the conditions of section 1, any solution $u\left( t,x\right) $ of
problem (1.4) -(1.5) belongs to the bounded subset of the function class $%
\widetilde{P}\left( Q\right) $ defined in the form 
\begin{equation*}
u\in L^{\infty }\left( 0,T;W_{p}^{1}\left( \Omega \right) \right) ;\quad 
\frac{\partial u}{\partial t}\in L^{\infty }\left( 0,T;L_{2}\left( \Omega
\right) \right) ;
\end{equation*}%
\begin{equation}
\overset{n}{\underset{i=1}{\sum }}\overset{t}{\underset{0}{\int }}\left\vert
D_{i}u\right\vert ^{p-2}D_{i}ud\tau \in W_{\infty }^{1}\left(
0,T;L_{q}\left( \Omega \right) \right) \cap L^{\infty }\left(
0,T;W_{2}^{1}\left( \Omega \right) \right)  \tag{4.1}
\end{equation}%
that satisfies the conditions determined by the dates of problem (1.4)-(1.5).
\end{theorem}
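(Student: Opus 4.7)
The plan is to derive the three bounds of (4.1) from two test-function pairings. Since problems (1.1)-(1.3) and (1.4)-(1.5) are equivalent, I would use whichever form is more convenient. The first two bounds come from a standard hyperbolic energy estimate: multiply (1.1) by $\partial_t u$ and integrate over $\Omega\times(0,t)$. The nonlinear term, after integration by parts in $x$, produces $\tfrac1p\frac{d}{d\tau}\sum_i\|D_iu\|_p^p$; the boundary contribution vanishes precisely by (1.3). This yields the identity
\[
\tfrac12\|\partial_t u(t)\|_2^2+\tfrac1p\sum_i\|D_iu(t)\|_p^p=\tfrac12\|u_1\|_2^2+\tfrac1p\sum_i\|D_iu_0\|_p^p+\int_0^t\!\!\int_\Omega h\,\partial_t u\,dx\,d\tau,
\]
so that Young's inequality and Gronwall give $u\in L^\infty(0,T;W_p^1(\Omega))$ and $\partial_t u\in L^\infty(0,T;L_2(\Omega))$, the first two bounds in (4.1).

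Setting $V_i(t,x):=\int_0^t|D_iu|^{p-2}D_iu\,d\tau$, the membership $V_i\in W_\infty^1(0,T;L_q(\Omega))$ is immediate: by construction $\partial_t V_i=|D_iu|^{p-2}D_iu$, and since $u\in L^\infty(0,T;W_p^1)$ one has $|D_iu|^{p-1}\in L^\infty(0,T;L_q(\Omega))$. For the remaining bound $V_i\in L^\infty(0,T;W_2^1(\Omega))$, I would pair (1.4) with $-\Delta u$ and evaluate at an arbitrary $t\in(0,T]$, mimicking the choice of $g$ from Proposition~1. The cross term $[\partial_t u,-\Delta u]_t$ reduces (after one integration by parts in $x$) to $\tfrac12\bigl(\|\nabla u(t)\|_2^2-\|\nabla u_0\|_2^2\bigr)$. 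For the nonlinear term $\sum_i[D_iV_i,\Delta u]_t$, integration by parts first in $x_i$ gives $-\int \mathbf V\cdot\nabla\Delta u\,dx$, whose boundary part vanishes since $\sum_iV_i\cos(\nu,x_i)=\int_0^t\sum_i|D_iu|^{p-2}D_iu\cos(\nu,x_i)\,d\tau=0$ by (1.3). A second integration by parts in $x_j$ and then in $\tau$, using $D_jV_i=(p-1)\int_0^t|D_iu|^{p-2}D_jD_iu\,d\tau$, converts this into $\tfrac{1}{2(p-1)}\sum_i\|\nabla V_i(t)\|_2^2$ plus controllable remainders, which combined with (3.5) and the right-hand side $H+u_1\in L^\infty(0,T;W_p^1(\Omega))$ delivers the required $L^\infty(0,T;W_2^1)$ bound on $V_i$.

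The main obstacle is controlling the boundary integrals produced at each step of integration by parts against $-\Delta u$: the Neumann-type condition (1.3) is nonlinear and annihilates only the \emph{combined} flux $\sum_i|D_iu|^{p-2}D_iu\cos(\nu,x_i)$, not the individual terms. The boundary contributions from $[\partial_t u,-\Delta u]_t$ and from $[D_iV_i,\Delta u]_t$ therefore do not vanish term-by-term, and one must regroup them into admissible flux-type combinations. This regrouping is what forces the argument to operate inside the weighted space $\widetilde S^1_{1,2(p-2),2}(\Omega)$ from (DS); it is also the same mechanism that prevents the estimates from closing into a full existence proof, consistent with the footnote in the introduction.
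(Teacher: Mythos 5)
Your derivation of the first three memberships in (4.1) is fine and essentially parallels the paper: the classical energy pairing with $\partial_t u$ yields $u\in L^\infty(0,T;W_p^1(\Omega))$ and $\partial_t u\in L^\infty(0,T;L_2(\Omega))$ (the paper extracts the same two bounds from a single combined pairing, see (4.2)--(4.5)), and the $W_\infty^1(0,T;L_q(\Omega))$ bound for $V:=\sum_i\int_0^t|D_iu|^{p-2}D_iu\,d\tau$ is indeed immediate from $\|\,|D_iu|^{p-1}\|_{L_q}=\|D_iu\|_{L_p}^{p-1}$, exactly as in the paper.

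The gap is in the last bound, $V\in L^\infty(0,T;W_2^1(\Omega))$. Pairing (1.4) with $-\Delta u$ does not convert the nonlinear term into $\tfrac{1}{2(p-1)}\tfrac{d}{dt}\sum_i\|\nabla V_i\|_2^2$ plus controllable remainders. After your two integrations by parts the volume term is $\int_\Omega\sum_{i,j}D_jV_i\,D_jD_iu\,dx$, and since $\partial_t(D_jV_i)=(p-1)|D_iu|^{p-2}D_jD_iu$, one has $D_jD_iu=\tfrac{1}{p-1}|D_iu|^{2-p}\,\partial_t(D_jV_i)$; the integrand is therefore $\tfrac{1}{2(p-1)}|D_iu|^{2-p}\,\partial_t|D_jV_i|^2$, which is not an exact time derivative because of the time-dependent weight $|D_iu|^{2-p}$ (singular where $D_iu$ vanishes, since $2-p<0$). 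The subsequent integration by parts in $\tau$ then produces a remainder containing $\partial_\tau\bigl(|D_iu|^{2-p}\bigr)$, which is not controlled by any quantity already bounded. The paper avoids this entirely by testing (1.4) with the parabolic operator $f(u)=\partial_t u-\sum_jD_j\bigl(|D_ju|^{p-2}D_ju\bigr)$ rather than with $\partial_t u-\Delta u$: since $\sum_jD_j\bigl(|D_ju|^{p-2}D_ju\bigr)=\partial_t\bigl(\sum_jD_jV_j\bigr)$, the quadratic term is exactly $\tfrac12\tfrac{d}{dt}\bigl\|\sum_iD_iV_i\bigr\|_{L_2}^2$ with no weight, Gronwall gives $\sum_iD_iV_i\in L^\infty(0,T;L_2(\Omega))$, and only afterwards is this converted into a bound on $\|\nabla V\|_{L_2}$ via the double integration-by-parts identity displayed after (4.6) together with the Gagliardo--Nirenberg inequality (4.7). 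Your closing paragraph correctly flags that the boundary terms from repeated integration by parts against $-\Delta u$ do not vanish individually under the nonlinear Neumann condition (1.3), but the decisive obstruction is the algebraic one above, and it is precisely why the paper replaces $-\Delta u$ by the nonlinear flux divergence at this stage.
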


\begin{proof}
Consider the dual form $\left\langle A\left( u\right) ,f\left( u\right)
\right\rangle $ for any $u\in {\Large P}\left( Q\right) $ that is defined by
virtue of Theorem 3. We behave as in proof of Proposition 1 and consider
only the integral with respect to $x$. Then we have after certain known acts 
\begin{equation*}
\underset{\Omega }{\int }\frac{\partial u}{\partial t}\ \frac{\partial u}{%
\partial t}\ dx\ +\underset{\Omega }{\int }\ \left( \overset{t}{\underset{0}{%
\int }}\overset{n}{\underset{i=1}{\sum }}D_{i}\left( \left\vert
D_{i}u\right\vert ^{p-2}D_{i}u\right) d\tau \right) \left( \overset{n}{%
\underset{j=1}{\sum }}D_{j}\left( \left\vert D_{j}u\right\vert
^{p-2}D_{j}u\right) \right) \ dx-
\end{equation*}%
\begin{equation*}
\underset{\Omega }{\int }\ \left( \overset{t}{\underset{0}{\int }}\overset{n}%
{\underset{i=1}{\sum }}D_{i}\left( \left\vert D_{i}u\right\vert
^{p-2}D_{i}u\right) d\tau \right) \frac{\partial u}{\partial t}dx-\underset{%
\Omega }{\int }\frac{\partial u}{\partial t}\left( \overset{n}{\underset{i=1}%
{\sum }}D_{i}\left( \left\vert D_{i}u\right\vert ^{p-2}D_{i}u\right) \right)
dx\geq
\end{equation*}%
\begin{equation*}
\frac{1}{2}\left\Vert \frac{\partial u}{\partial t}\right\Vert _{L_{2}\left(
\Omega \right) }^{2}+\frac{1}{2}\frac{\partial }{\partial t}\left\Vert 
\overset{t}{\underset{0}{\int }}\overset{n}{\underset{i=1}{\sum }}%
D_{i}\left( \left\vert D_{i}u\right\vert ^{p-2}D_{i}u\right) d\tau
\right\Vert _{L_{2}\left( \Omega \right) }^{2}+
\end{equation*}%
\begin{equation}
\frac{1}{p}\frac{\partial }{\partial t}\overset{n}{\underset{i=1}{\sum }}%
\left\Vert D_{i}u\right\Vert _{L_{p}\left( \Omega \right) }^{p}-\frac{1}{2}%
\left\Vert \overset{t}{\underset{0}{\int }}\overset{n}{\underset{i=1}{\sum }}%
D_{i}\left( \left\vert D_{i}u\right\vert ^{p-2}D_{i}u\right) d\tau
\right\Vert _{L_{2}\left( \Omega \right) }^{2}\left( t\right)  \tag{4.2}
\end{equation}

Now, consider the right part of the dual form, i.e. $\left\langle
H,f\right\rangle $, for the determination of the bounded subset, to which
the solutions of the problem belongs (and for the receiving of the a priori
estimations). Then we get 
\begin{equation*}
\left\vert \underset{\Omega }{\int }H\ \frac{\partial u}{\partial t}\ dx-%
\underset{\Omega }{\int }H\overset{n}{\underset{j=1}{\sum }}D_{j}\left(
\left\vert D_{j}u\right\vert ^{p-2}D_{j}u\right) \ dx\right\vert \leq
C\left( \varepsilon \right) \left\Vert H\right\Vert _{L_{2}\left( Q\right)
}^{2}+
\end{equation*}%
\begin{equation}
\varepsilon \left\Vert \frac{\partial u}{\partial t}\right\Vert
_{L_{2}\left( \Omega \right) }^{2}\left( t\right) +C\left( \varepsilon
_{1}\right) \left\Vert H\right\Vert _{L_{p}\left( W_{p}^{1}\right)
}^{p}+\varepsilon _{1}\overset{n}{\underset{j=1}{\sum }}\left\Vert
D_{j}u\right\Vert _{L_{p}\left( \Omega \right) }^{p}\left( t\right) . 
\tag{4.3}
\end{equation}

From (4.2) and (4.3) it follows%
\begin{equation*}
0=\underset{\Omega }{\int }\left( A\left( u\right) -H\right) \ f\left(
u\right) \ dx\geq \frac{1}{2}\left\Vert \frac{\partial u}{\partial t}%
\right\Vert _{L_{2}\left( \Omega \right) }^{2}+\frac{1}{p}\frac{\partial }{%
\partial t}\overset{n}{\underset{j=1}{\sum }}\left\Vert D_{j}u\right\Vert
_{L_{p}\left( \Omega \right) }^{p}+
\end{equation*}%
\begin{equation*}
\frac{1}{2}\frac{\partial }{\partial t}\left\Vert \overset{t}{\underset{0}{%
\int }}\overset{n}{\underset{i=1}{\sum }}D_{i}\left( \left\vert
D_{i}u\right\vert ^{p-2}D_{i}u\right) d\tau \right\Vert _{L_{2}\left( \Omega
\right) }^{2}-\frac{1}{2}\left\Vert \overset{t}{\underset{0}{\int }}\overset{%
n}{\underset{i=1}{\sum }}D_{i}\left( \left\vert D_{i}u\right\vert
^{p-2}D_{i}u\right) d\tau \right\Vert _{L_{2}\left( \Omega \right) }^{2}-
\end{equation*}%
\begin{equation*}
\varepsilon _{1}\overset{n}{\underset{i=1}{\sum }}\left\Vert
D_{i}u\right\Vert _{L_{p}\left( \Omega \right) }^{p}-C\left( \varepsilon
\right) \left\Vert H\right\Vert _{L_{2}\left( Q\right) }^{2}-\varepsilon
\left\Vert \frac{\partial u}{\partial t}\right\Vert _{L_{2}\left( \Omega
\right) }^{2}-C\left( \varepsilon _{1}\right) \left\Vert H\right\Vert
_{L_{p}\left( W_{p}^{1}\right) }^{p}
\end{equation*}%
or if we choose small parameters $\varepsilon >0$ and $\varepsilon _{1}>0$
such as needed, then we have 
\begin{equation*}
c\left\Vert \frac{\partial u}{\partial t}\right\Vert _{L_{2}\left( \Omega
\right) }^{2}+\frac{1}{p}\frac{\partial }{\partial t}\overset{n}{\underset{%
i=1}{\sum }}\left\Vert D_{i}u\right\Vert _{L_{p}\left( \Omega \right) }^{p}+
\end{equation*}%
\begin{equation*}
\frac{1}{2}\frac{\partial }{\partial t}\left\Vert \overset{t}{\underset{0}{%
\int }}\overset{n}{\underset{i=1}{\sum }}D_{i}\left( \left\vert
D_{i}u\right\vert ^{p-2}D_{i}u\right) d\tau \right\Vert _{L_{2}\left( \Omega
\right) }^{2}\leq C\left( \left\Vert H\right\Vert _{L_{2}\left( Q\right)
},\left\Vert H\right\Vert _{L_{p}\left( W_{p}^{1}\right) }\right) +
\end{equation*}%
\begin{equation}
\frac{1}{p}\overset{n}{\underset{i=1}{\sum }}\left\Vert D_{i}u\right\Vert
_{L_{p}\left( \Omega \right) }^{p}+\frac{1}{2}\left\Vert \overset{t}{%
\underset{0}{\int }}\overset{n}{\underset{i=1}{\sum }}D_{i}\left( \left\vert
D_{i}u\right\vert ^{p-2}D_{i}u\right) d\tau \right\Vert _{L_{2}\left( \Omega
\right) }^{2}.  \tag{4.4}
\end{equation}%
Inequality (4.4) show that we can use Gronwall lemma. Consequently using
Gronwall lemma we get 
\begin{equation*}
\overset{n}{\underset{i=1}{\sum }}\left\Vert D_{i}u\right\Vert _{L_{p}\left(
\Omega \right) }^{p}\left( t\right) +\left\Vert \overset{t}{\underset{0}{%
\int }}\overset{n}{\underset{i=1}{\sum }}D_{i}\left( \left\vert
D_{i}u\right\vert ^{p-2}D_{i}u\right) d\tau \right\Vert _{L_{2}\left( \Omega
\right) }^{2}\left( t\right) \leq
\end{equation*}%
\begin{equation}
C\left( \left\Vert H\right\Vert _{L_{2}\left( Q\right) },\left\Vert
H\right\Vert _{L_{p}\left( W_{p}^{1}\right) },\left\Vert u_{0}\right\Vert
_{W_{p}^{1}\left( \Omega \right) }\right)  \tag{4.5}
\end{equation}%
holds for a.e. $t\in \left[ 0,T\right] $.

Thus we have for any solution of problem (1.4) -(1.5) the following
estimations 
\begin{equation*}
\left\Vert u\right\Vert _{W_{p}^{1}\left( \Omega \right) }\left( t\right)
\leq C\left( \left\Vert H\right\Vert _{L_{p}\left( W_{p}^{1}\right)
},\left\Vert u_{0}\right\Vert _{W_{p}^{1}\left( \Omega \right) }\right) ,
\end{equation*}%
\begin{equation*}
\left\Vert \overset{t}{\underset{0}{\int }}\overset{n}{\underset{i=1}{\sum }}%
D_{i}\left( \left\vert D_{i}u\right\vert ^{p-2}D_{i}u\right) d\tau
\right\Vert _{L_{2}\left( \Omega \right) }\left( t\right) \leq C\left(
\left\Vert H\right\Vert _{L_{p}\left( W_{p}^{1}\right) },\left\Vert
u_{0}\right\Vert _{W_{p}^{1}\left( \Omega \right) }\right) ,
\end{equation*}%
\begin{equation*}
\left\Vert \frac{\partial u}{\partial t}\right\Vert _{L_{2}\left( \Omega
\right) }\left( t\right) \leq C\left( \left\Vert H\right\Vert _{L_{p}\left(
W_{p}^{1}\right) },\left\Vert u_{0}\right\Vert _{W_{p}^{1}\left( \Omega
\right) }\right)
\end{equation*}%
hold for a.e. $t\in \left[ 0,T\right] $ by virtue of inequalities (4.2) -
(4.5). In other words we have that any solution of problem (1.4) -(1.5)
belongs to the bounded subset of the following class 
\begin{equation*}
u\in L^{\infty }\left( 0,T;W_{p}^{1}\left( \Omega \right) \right) ;\quad 
\frac{\partial u}{\partial t}\in L^{\infty }\left( 0,T;L_{2}\left( \Omega
\right) \right) ;
\end{equation*}%
\begin{equation*}
\frac{\partial }{\partial t}\left( \overset{n}{\underset{i=1}{\sum }}\overset%
{t}{\underset{0}{\int }}\left\vert D_{i}u\right\vert ^{p-2}D_{i}ud\tau
\right) \in L^{\infty }\left( 0,T;L_{q}\left( \Omega \right) \right)
\end{equation*}%
\begin{equation}
\overset{t}{\underset{0}{\int }}\overset{n}{\underset{i=1}{\sum }}%
D_{i}\left( \left\vert D_{i}u\right\vert ^{p-2}D_{i}u\right) d\tau \in
L^{\infty }\left( 0,T;L_{2}\left( \Omega \right) \right) ,  \tag{4.6}
\end{equation}%
for each given $u_{0},u_{1}\in W_{p}^{1}\left( \Omega \right) $, $h\in
L_{p}\left( 0,T;W_{p}^{1}\left( \Omega \right) \right) $.

From here it follows that all solutions of this problem belong to a bounded
subset of space $P\left( Q\right) $, which is defined by (4.1).

Indeed, firstly it is easy to see that the following inequality holds 
\begin{equation*}
\left\Vert \overset{n}{\underset{i=1}{\sum }}\overset{t}{\underset{0}{\int }}%
\left\vert D_{i}u\right\vert ^{p-2}D_{i}ud\tau \right\Vert _{L_{q}\left(
\Omega \right) }^{q}\leq C\overset{n}{\underset{i=1}{\sum }}\left\Vert
\left\vert D_{i}u\right\vert ^{p-2}D_{i}u\right\Vert _{L_{q}\left( \Omega
\right) }^{q}\leq
\end{equation*}%
\begin{equation*}
C\left( T,mes\ \Omega \right) \left\Vert u\right\Vert _{W_{p}^{1}\left(
\Omega \right) }^{q}\left( t\right) \Longrightarrow \overset{t}{\underset{0}{%
\int }}\overset{n}{\underset{i=1}{\sum }}\left\vert D_{i}u\right\vert
^{p-2}D_{i}ud\tau \in L^{\infty }\left( 0,T;L_{q}\left( \Omega \right)
\right) ,
\end{equation*}%
and secondary, the following equalities are correct%
\begin{equation*}
\underset{\Omega }{\int }\left( \overset{t}{\underset{0}{\int }}\overset{n}{%
\underset{i=1}{\sum }}D_{i}\left( \left\vert D_{i}u\right\vert
^{p-2}D_{i}u\right) d\tau \right) ^{2}dx\equiv \left\Vert \overset{t}{%
\underset{0}{\int }}\overset{n}{\underset{i=1}{\sum }}D_{i}\left( \left\vert
D_{i}u\right\vert ^{p-2}D_{i}u\right) d\tau \right\Vert _{2}^{2}\equiv
\end{equation*}%
\begin{equation*}
\left\langle \overset{t}{\underset{0}{\int }}\overset{n}{\underset{i=1}{\sum 
}}D_{i}\left( \left\vert D_{i}u\right\vert ^{p-2}D_{i}u\right) d\tau ,%
\overset{t}{\underset{0}{\int }}\overset{n}{\underset{j=1}{\sum }}%
D_{j}\left( \left\vert D_{j}u\right\vert ^{p-2}D_{j}u\right) d\tau
\right\rangle =
\end{equation*}%
\begin{equation*}
\overset{n}{\underset{i,j=1}{\sum }}\left\langle \overset{t}{\underset{0}{%
\int }}D_{j}\left( \left\vert D_{i}u\right\vert ^{p-2}D_{i}u\right) d\tau ,%
\overset{t}{\underset{0}{\int }}D_{i}\left( \left\vert D_{j}u\right\vert
^{p-2}D_{j}u\right) d\tau \right\rangle =
\end{equation*}%
\begin{equation*}
\overset{n}{\underset{i,j=1}{\sum }}\left\langle D_{j}\overset{t}{\underset{0%
}{\int }}\left\vert D_{i}u\right\vert ^{p-2}D_{i}ud\tau ,D_{i}\overset{t}{%
\underset{0}{\int }}\left\vert D_{j}u\right\vert ^{p-2}D_{j}ud\tau
\right\rangle ,
\end{equation*}%
and also 
\begin{equation*}
\overset{n}{\underset{j=1}{\sum }}\left\Vert D_{j}\overset{t}{\underset{0}{%
\int }}~\overset{n}{\underset{i=1}{\sum }}\left( \left\vert
D_{i}u\right\vert ^{p-2}D_{i}u\right) d\tau \right\Vert _{2}^{2}=
\end{equation*}%
\begin{equation*}
\overset{n}{\underset{j=1}{\sum }}\left\langle D_{j}\overset{t}{\underset{0}{%
\int }}~\overset{n}{\underset{i=1}{\sum }}\left\vert D_{i}u\right\vert
^{p-2}D_{i}ud\tau ,D_{j}\overset{t}{\underset{0}{\int }}~\overset{n}{%
\underset{i=1}{\sum }}\left\vert D_{i}u\right\vert ^{p-2}D_{i}ud\tau
\right\rangle .
\end{equation*}%
These demonstrate that the function 
\begin{equation*}
v\left( t,x\right) \equiv \overset{t}{\underset{0}{\int }}\ \overset{n}{%
\underset{i=1}{\sum }}\left\vert D_{i}u\right\vert ^{p-2}D_{i}u\ d\tau
\end{equation*}%
belongs to a bounded subset of the space 
\begin{equation*}
L^{\infty }\left( 0,T;L_{q}\left( \Omega \right) \right) \cap \left\{
v\left( t,x\right) \left\vert ~Dv\in \right. L^{\infty }\left(
0,T;L_{2}\left( \Omega \right) \right) \right\} .
\end{equation*}%
Therefore, in order to prove the correctness of (4.1), it remains to use the
following inequality, i.e. the Nirenberg-Gagliardo-Sobolev inequality 
\begin{equation}
\left\Vert D^{\beta }v\right\Vert _{p_{2}}\leq C\left( \underset{\left\vert
\alpha \right\vert =m}{\sum }\left\Vert D^{\alpha }v\right\Vert
_{p_{0}}^{\theta }\right) \left\Vert v\right\Vert _{p_{1}}^{1-\theta },\quad
0\leq \left\vert \beta \right\vert =l\leq m-1,  \tag{4.7}
\end{equation}%
which holds for each $v\in W_{p_{0}}^{m}\left( \Omega \right) $, $\Omega
\subset R^{n}$, $n\geq 1$, $C\equiv C\left( p_{0},p_{1},p_{2},l,s\right) $
and $\theta $ such that $\frac{1}{p_{2}}-\frac{l}{n}=\left( 1-\theta \right) 
\frac{1}{p_{1}}+\theta \left( \frac{1}{p_{0}}-\frac{m}{n}\right) $. Really,
in inequality (4.7) for us it is enough to choose $p_{2}=2$, $l=0$, $p_{1}=q$%
, $p_{0}=2$ then we get 
\begin{equation*}
\frac{1}{2}=\left( 1-\theta \right) \frac{p-1}{p}+\theta \left( \frac{1}{2}-%
\frac{1}{n}\right) \Longrightarrow \theta \left( \frac{1}{2}-\frac{1}{n}-%
\frac{p-1}{p}\right) =\frac{1}{2}-\frac{p-1}{p}\Longrightarrow
\end{equation*}%
$\theta =\frac{n\left( p-2\right) }{n\left( p-2\right) +2p}$ for $p>2$, and
so (4.1) is correct.
\end{proof}

\begin{corollary}
Under the conditions of Theorem 4, each solution of problem (1.1)-(1.3)
belongs to the bounded subset of the class $\mathbf{V}\left( Q\right) $
defined in (DS).
\end{corollary}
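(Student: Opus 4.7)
The plan is to derive this Corollary directly from Theorem~4 by exploiting the equivalence between problems (1.1)--(1.3) and (1.4)--(1.5). Since any solution $u$ of (1.1)--(1.3) also solves (1.4)--(1.5), Theorem~4 applies and yields the bounded subset membership (4.1). This immediately gives $u\in L^\infty(0,T;W_p^1(\Omega))$, $\partial u/\partial t\in L^\infty(0,T;L_2(\Omega))$ (from which $u\in W_2^1(0,T;L_2(\Omega))$ follows since $T<\infty$), and the $W_\infty^1(0,T;L_q(\Omega))\cap L^\infty(0,T;W_2^1(\Omega))$ bound on $\sum_i\int_0^t|D_iu|^{p-2}D_iu\,d\tau$. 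The initial and boundary conditions in (DS) are inherited from (1.2)--(1.3), so the only task is to extract the remaining components of $V(Q)$.

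Next I would extract $u\in L_{p-1}(0,T;\widetilde{S}^1_{1,2(p-2),2}(\Omega))$. The pure $L^{2p-2}$ terms $\|u\|_{2p-2}^{2p-2}$ and $\sum_i\|D_iu\|_{2p-2}^{2p-2}$ in the $\widetilde{S}^1$-pseudo-norm are controlled by the $L^\infty(0,T;W_p^1(\Omega))$ bound via Sobolev embedding, supplemented, if $2p-2$ exceeds the critical exponent, by a Gagliardo-Nirenberg interpolation of the same flavor as the one used at the end of the proof of Theorem~4. The heart of the argument is the weighted mixed-derivative piece $\|\sum_{ij}|D_iu|^{p-2}D_jD_iu\|_2^2$: the identity $D_j(|D_iu|^{p-2}D_iu)=(p-1)|D_iu|^{p-2}D_jD_iu$ turns the $L^\infty(0,T;W_2^1(\Omega))$ control of $\sum_i\int_0^t|D_iu|^{p-2}D_iu\,d\tau$ into an $L^\infty(0,T;L_2(\Omega))$ control of the integrated quantity $\sum_i\int_0^t|D_iu|^{p-2}D_jD_iu\,d\tau$. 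Combining this with the $W_\infty^1(0,T;L_q(\Omega))$ bound (giving pointwise-in-$t$ control of $\sum_i|D_iu|^{p-2}D_iu$ in $L_q$) and applying the Nirenberg-Gagliardo-Sobolev inequality (4.7) transfers the integrated spatial regularity into a pointwise-in-$t$ bound on $\sum_{ij}|D_iu|^{p-2}D_jD_iu$ with the required $L^{p-1}(0,T)$ integrability.

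Once this bound is in hand, taking $j=i$ in the mixed-derivative term yields $\sum_i|D_iu|^{p-2}D_i^2u\in L_1(0,T;L_2(\Omega))$. Substituting into (1.1) and invoking $h\in L_p(0,T;W_p^1(\Omega))\hookrightarrow L_1(0,T;L_2(\Omega))$ then gives $\partial^2u/\partial t^2\in L_1(0,T;L_2(\Omega))$. This completes the verification of every component of $V(Q)$ as defined by (DS), with the asserted uniform boundedness in terms of the data norms that appear in (4.5).

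The main obstacle I anticipate is precisely the mixed-derivative step: bootstrapping the integrated $L^\infty(0,T;W_2^1)$ bound on the nonlinear flux into a pointwise-in-$t$, $L^2$-in-$x$ bound on $\sum_{ij}|D_iu|^{p-2}D_jD_iu$ while preserving $L^{p-1}$-integrability in $t$ is subtle, because the interpolation must balance spatial against temporal regularity without losing either. Should the Nirenberg-Gagliardo route prove too delicate, an alternative is to derive a fresh energy identity by testing the integrated equation (1.4) against $-\sum_i D_i(|D_iu|^{p-2}D_iu)$; after integration by parts this produces the coercive quantity $\sum_{ij}\int|D_iu|^{p-2}(D_jD_iu)^2\,dx$ directly, in the spirit of the computation used for Proposition~1, from which the $\widetilde{S}^1_{1,2(p-2),2}$ bound (via Cauchy-Schwarz) follows.
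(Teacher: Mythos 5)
Your overall strategy --- invoke the equivalence of (1.1)--(1.3) with (1.4)--(1.5), apply Theorem~4, and then read $\partial^{2}u/\partial t^{2}\in L_{1}(0,T;L_{2}(\Omega))$ off the equation once the second-order nonlinear term is controlled --- is the same as the paper's. The gap is in your central step. You propose to convert the $L^{\infty}(0,T;W_{2}^{1}(\Omega))$ bound on the primitive $v(t,\cdot)=\sum_{i}\int_{0}^{t}|D_{i}u|^{p-2}D_{i}u\,d\tau$ into a pointwise-in-$t$, $L_{2}(\Omega)$ bound on $\sum_{i,j}|D_{i}u|^{p-2}D_{j}D_{i}u$ via the Nirenberg--Gagliardo--Sobolev inequality (4.7). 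This cannot work: (4.7) interpolates between spatial norms of a \emph{single} function at a fixed time, whereas the spatial regularity you hold is that of the time primitive $v(t,\cdot)$, and the quantity you want to bound is (up to the factor $p-1$) its time derivative $\partial_{t}Dv$. No spatial interpolation recovers $F(t,\cdot)$ from $\int_{0}^{t}F\,d\tau$; for instance $F(t,x)=\phi(x)g'(t)$ with $g$ bounded and $g'\notin L^{1}(0,T)$ has a primitive bounded in every spatial norm while $F$ itself lies in no $L^{r}(0,T;L_{2}(\Omega))$. Your fallback does not rescue this: testing the integrated equation (1.4) against $-\sum_{j}D_{j}(|D_{j}u|^{p-2}D_{j}u)$ reproduces exactly the computation (4.2) already carried out in Theorem~4 --- the relevant cross term is $\tfrac{1}{2}\tfrac{d}{dt}\bigl\Vert\sum_{i}D_{i}\int_{0}^{t}(\cdots)\,d\tau\bigr\Vert_{2}^{2}$, again a statement about the primitive, not the coercive quantity $\sum_{i,j}\int|D_{i}u|^{p-2}(D_{j}D_{i}u)^{2}\,dx$ you claim to obtain. (Testing the original hyperbolic equation against that multiplier fails as well, since the resulting term $-(p-1)\sum_{i}\int|D_{i}u|^{p-2}(D_{i}u_{t})^{2}\,dx$ enters with the wrong sign.)

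For comparison, the paper's proof of the corollary does not attempt any pointwise-in-$t$ control of the flux: from the boundedness of $\int_{0}^{t}\sum_{i,j}D_{j}(|D_{i}u|^{p-2}D_{i}u)\,d\tau$ in $L^{\infty}(0,T;L_{2}(\Omega))$ it passes directly, citing ``the property of the Lebesgue integrals,'' to $\sum_{i}D_{i}(|D_{i}u|^{p-2}D_{i}u)\in L_{1}(0,T;L_{2}(\Omega))$, and then concludes $\partial^{2}u/\partial t^{2}\in L_{1}(0,T;L_{2}(\Omega))$ from (4.9) exactly as you do in your last step. That passage from a bounded primitive to an integrable integrand is itself the delicate point of the paper's argument, and the paper's proof does not address the $L_{p-1}(0,T;\widetilde{S}^{1}_{1,2(p-2),2}(\Omega))$ component at all (you deserve credit for noticing it must be checked); but the mechanism you propose for it --- interpolation from the primitive --- is not available, so as written the proposal does not close the argument.
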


\begin{proof}
From (4.1) it follows 
\begin{equation*}
\overset{n}{\underset{i=1}{\sum }}\overset{t}{\underset{0}{\int }}\left\vert
D_{i}u\right\vert ^{p-2}D_{i}ud\tau \in L^{\infty }\left( 0,T;\overset{0}{W}%
\ _{2}^{1}\left( \Omega \right) \right) \cap W_{\infty }^{1}\left(
0,T;L_{q}\left( \Omega \right) \right) ,
\end{equation*}%
moreover 
\begin{equation*}
\overset{t}{\underset{0}{\int }}\overset{n}{\underset{i,j=1}{\sum }}%
D_{j}\left( \left\vert D_{i}u\right\vert ^{p-2}D_{i}u\right) d\tau \in
L^{\infty }\left( 0,T;L_{2}\left( \Omega \right) \right) ,
\end{equation*}%
and is bounded in this space. Then taking into account the property of the
Lebesgue integrals we obtain 
\begin{equation*}
\overset{t}{\underset{0}{\int }}\ \left\{ \underset{\Omega }{\int }\left[ 
\overset{n}{\underset{i,j=1}{\sum }}D_{j}\left( \left\vert D_{i}u\right\vert
^{p-2}D_{i}u\right) \right] ^{2}dx\right\} ^{\frac{1}{2}}d\tau \leq C,\quad
C\neq C\left( t\right)
\end{equation*}

from which we get 
\begin{equation*}
\overset{n}{\underset{i,j=1}{\sum }}D_{j}\left( \left\vert D_{i}u\right\vert
^{p-2}D_{i}u\right) \in L_{1}\left( 0T;L_{2}\left( \Omega \right) \right) ,
\end{equation*}%
and so 
\begin{equation}
\overset{n}{\underset{i=1}{\sum }}D_{i}\left( \left\vert D_{i}u\right\vert
^{p-2}D_{i}u\right) \in L_{1}\left( 0T;L_{2}\left( \Omega \right) \right) 
\tag{4.8}
\end{equation}%
in which is bounded.

If we consider equation (1.1), and take into account that it is solvable in
the generalized sense and $\frac{\partial u}{\partial t}\in W_{\infty
}^{1}\left( 0,T;L_{2}\left( \Omega \right) \right) $ (by (4.1)) then from
Definition 1 it follows that 
\begin{equation*}
\left[ \frac{\partial ^{2}u}{\partial t^{2}},v\right] -\left[ \overset{n}{%
\underset{i=1}{\sum }}D_{i}\left( \left\vert D_{i}u\right\vert
^{p-2}D_{i}u\right) ,v\right] =\left[ h,v\right]
\end{equation*}%
holds for any $v\in W_{\widetilde{p}}^{1}\left( 0,T;L_{2}\left( \Omega
\right) \right) $, $\widetilde{p}>1$.

Hence 
\begin{equation}
\left[ \frac{\partial ^{2}u}{\partial t^{2}},v\right] =\left[ \overset{n}{%
\underset{i=1}{\sum }}D_{i}\left( \left\vert D_{i}u\right\vert
^{p-2}D_{i}u\right) +h,v\right]  \tag{4.9}
\end{equation}%
holds for any $v\in L^{\infty }\left( Q\right) $.

Thus we obtain $\frac{\partial ^{2}u}{\partial t^{2}}\in L_{1}\left(
0T;L_{2}\left( \Omega \right) \right) $ by virtue of (4.1), (4.8) and as 
\begin{equation*}
\overset{n}{\underset{i=1}{\sum }}D_{i}\left( \left\vert D_{i}u\right\vert
^{p-2}D_{i}u\right) +h\in L_{1}\left( 0T;L_{2}\left( \Omega \right) \right) .
\end{equation*}
\end{proof}

\end{document}